\title{On o-minimal homotopy}
\newtheorem{deff}{Definition}[section]
\newtheorem{teo}[deff]{Theorem}
\newtheorem{lema}[deff]{Lemma}
\newtheorem{cor}[deff]{Corollary}
\newtheorem{fact}[deff]{Fact}
\theoremstyle{definition}
\newtheorem{obs}[deff]{Remark}
\newcommand{\V}{\textrm{Vert}}
\newcommand{\St}{\textrm{St}}
\newcommand{\coc}{\emph{co}}
\newcommand{\cat}{\textrm{cat}}
\newcommand{\catc}{cat}
\newcommand{\id}{\textrm{id}}
\begin{document}
\begin{center}ON THE O-MINIMAL LS-CATEGORY\end{center}
\begin{center}\begin{footnotesize}EL\'IAS BARO\footnote{Partially supported by MTM2008-00272/MTM and a FPU grant of MEC (Spain).

\emph{Email:} \texttt{elias.baro@uam.es}

\textit{Date}: April 28, 2009.

\textit{MSC 2000}: 03C64, 14P10, 55Q99, 55M30.

\textit{Keywords}: o-minimality, LS-category, definable groups, homotopy equivalences.
}\end{footnotesize}

\begin{scriptsize}Departamento de Matem\'aticas, Universidad Aut\'onoma de Madrid, 28049, Madrid, Spain.\end{scriptsize}
\end{center}
\begin{quote}
\begin{footnotesize}\begin{scriptsize}ABSTRACT\end{scriptsize}. We introduce the o-minimal LS-category of definable sets in o-minimal expansions of ordered fields and we establish a relation with the semialgebraic and the classical one. We also study the o-minimal LS-category of definable groups. Along the way, we show that two definably connected definably compact definable groups $G$ and $H$ are definable homotopy equivalent if and only if $\mathbb{L}(G)$ and $\mathbb{L}(H)$ are homotopy equivalent, where  $\mathbb{L}$ is the functor which associates to each definable group its corresponding Lie group via Pillay's conjecture.
\end{footnotesize} 
\end{quote}
\section{Introduction}
This paper is a contribution to the development of o-minimal topology, in particular to o-minimal homotopy. This development has found a successful approach through homology and cohomology theories. Recently, in \cite{07BO} the author and M. Otero also provide a homotopy theory to the o-minimal setting (see Fact \ref{t:compa}).
%It is proved there that (roughly) every definable map between semialgebraic sets is definable homotopic to a semialgebraic one. Moreover, if two semialgebraic maps are definably homotopic then they are semialgebraic homotopic.

We introduce the o-minimal Lusternik-Schnirelmann category (in short o-minimal LS-category) of definable sets in an o-minimal expansion $\mathcal{R}$ of a real closed field. The classical one was originally introduced to provide a lower bound on the number of critical points for any smooth function on a manifold, then it became an important subject in algebraic topology. The LS-category of a topological space $X$, denoted by $\cat(X)$, is the least integer $m$ such that $X$ has an open cover of $m+1$ elements with each of them contractible to a point in $X$. The o-minimal LS-category of a definable set $X$, denoted by $\cat(X)^{\mathcal{R}}$, is defined in the obvious way (see Definition \ref{f:catinv}).

In Section \ref{s:LScat}, in analogy with the homotopy results in \cite{07BO}, we prove the following comparison result.
\begin{teo}\label{cor:catdefreal}Let $X$ be a semialgebraic set defined without parameters. Then $cat(X)^{\mathcal{R}}=cat(X(\mathbb{R}))$.
\end{teo}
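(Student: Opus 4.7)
The natural plan is to factor the equality through the semialgebraic LS-category. Write $\cat^{sa}(X)^{\mathcal{R}}$ (resp.\ $\cat^{sa}(X(\mathbb{R}))$) for the variant of $\cat$ in which the open cover and the contractions are required to be semialgebraic over $\mathcal{R}$ (resp.\ over $\mathbb{R}$). The statement then reduces to the chain
\[
\cat(X)^{\mathcal{R}} \;=\; \cat^{sa}(X)^{\mathcal{R}} \;=\; \cat^{sa}(X(\mathbb{R})) \;=\; \cat(X(\mathbb{R})),
\]
which I would treat one equality at a time.

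For the first equality, $\leq$ is immediate since every semialgebraic datum is definable. For the reverse, given a definable cover $\{U_0,\dots,U_m\}$ of $X$ with definable contractions $H_i\colon U_i\times[0,1]\to X$, I would invoke the semialgebraic triangulation theorem over $\mathcal{R}$ and refine by iterated barycentric subdivision so that each open star $\St(v)$ is contained in some $U_{i(v)}$. Grouping by an assignment $v\mapsto i(v)$, the semialgebraic open sets $U_i':=\bigcup_{i(v)=i}\St(v)$ still cover $X$ with $U_i'\subset U_i$, and the inclusion $U_i'\hookrightarrow X$ inherits a definable null-homotopy from $H_i$; Fact~\ref{t:compa} then upgrades this to a semialgebraic null-homotopy because both $U_i'$ and $X$ are semialgebraic. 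The middle equality is Tarski's transfer principle: since $X$ has no parameters, the first-order statement expressing ``$\cat^{sa}(X)\leq m$'' (with formula complexities uniformly bounded by a number depending only on $m$) transfers between $\mathcal{R}$ and $\mathbb{R}$. The last equality is the classical semialgebraic-versus-topological comparison over $\mathbb{R}$, proved by the same star-grouping argument combined with the Delfs--Knebusch theorem identifying continuous and semialgebraic homotopies of semialgebraic maps.

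The main obstacle I anticipate is ensuring that the star-refinement step in the first (and last) equality genuinely produces a cover subordinate to $\{U_i\}$ while preserving its cardinality $m+1$. This requires an o-minimal Lebesgue-number-type argument to justify that finitely many barycentric subdivisions suffice to make all open stars subordinate to the cover; in the non-definably-compact case one may first need to shrink $\{U_i\}$ to a closed definable cover using definable normality and then argue on each closed simplex separately. A secondary subtlety is the uniform bounding of formula complexity required for the Tarski transfer step, which I would handle by fixing formula shapes that depend only on $m$ and quantifying over their parameters.
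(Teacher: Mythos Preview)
Your three-step chain $\cat(X)^{\mathcal{R}}=\cat^{sa}(X)^{\mathcal{R}}=\cat^{sa}(X(\mathbb{R}))=\cat(X(\mathbb{R}))$ is exactly the decomposition the paper uses (Theorems~\ref{t:catcomp1}, \ref{t:catcomp2}, \ref{t:catcomp3}), and your argument for the last equality coincides with the paper's. The problem is the first equality.

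Your plan there is to iterate barycentric subdivision until every open star lies in some $U_i$, justified by an ``o-minimal Lebesgue-number-type argument''. This fails over a non-Archimedean real closed field $R$. Even when $|K|$ is definably compact and a Lebesgue number $\delta\in R$ for the definable cover exists, $\delta$ may be infinitesimal relative to the simplex diameters (e.g.\ cover $[0,1]$ by $[0,\tfrac12+\epsilon)$ and $(\tfrac12-\epsilon,1]$ with $\epsilon$ infinitesimal), and the mesh after $n$ barycentric subdivisions is only $\bigl(\tfrac{d}{d+1}\bigr)^{n}$ times the original---which never drops below $\delta$ for any finite $n$. So no amount of barycentric refinement makes the stars subordinate to $\{U_i\}$. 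The paper circumvents this with the Normal Triangulation Theorem (Fact~\ref{teo:triangulacion}): it produces a subdivision $K'$ of $K$ together with a definable homeomorphism $\phi:|K'|\to|K|$ that \emph{partitions} the $U_i$, so each $\phi^{-1}(U_i)$ is already a subcomplex of $K'$ and hence semialgebraic. The paper even remarks that this is precisely why the normal triangulation theorem is needed rather than the standard one.

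For the middle equality the paper does not use Tarski transfer directly; instead it passes through $\overline{\mathbb{Q}}$ via Theorem~\ref{t:catcomp2}, whose proof again triangulates so that the $V_i$ are subcomplexes and then invokes Fact~\ref{t:compa}(b) to move the contracting homotopies down. Your Tarski-transfer route would work only after you have such a uniform bound on the complexity of the cover and the homotopies, and the cleanest way to obtain that bound is again the normal-triangulation step you are missing.
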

On the other hand, recall that given a definably compact $d$-dimensional definable group $G$, the work of several authors (e.g. A. Berarducci, E. Hrushovski, Y. Peterzil, A. Pillay, M. Otero and others) in the positively resolution of Pillay's conjecture has shown that there exist a smallest type-definable subgroup $G^{00}$ of $G$ with bounded index such that $\mathbb{L}(G):=G/G^{00}$ with the logic topology is a compact $d$-dimensional Lie group (see \cite{05BOPP},\cite{08HPP},\cite{04P}). Moreover, if $G$ is definably connected then $\mathbb{L}(G)$ is connected. Recall that a subset of $\mathbb{L}(G)$ is closed in the logic topology if and only if its preimage by the projection $\pi:G\rightarrow \mathbb{L}(G)$ is a type-definable subset of $G$. With the logic topology $\pi:G\rightarrow \mathbb{L}(G)$ is a continuous epimorphism.

The main motivation to study the o-minimal LS-category is establish a topological analogy between a definably compact definably connected group $G$ and the connected compact Lie group $\mathbb{L}(G)$ associated to it. In this direction, it has been proved by A. Berarducci in \cite{07Be} that the cohomology groups of $G$ are isomorphic to those of $\mathbb{L}(G)$. Moreover, using the development of o-minimal homotopy mentioned above, A. Berarducci, M. Mamino and M. Otero prove in \cite{08pBMO} that the homotopy groups of $G$ and $\mathbb{L}(G)$ are isomorphic. The aim of Section \ref{s:LScatgr} is to prove that $\cat(G)^{\mathcal{R}}$ and $\cat(\mathbb{L}(G))$ are equal (see Corollary \ref{c:catgr}). To do this we establish the following stronger result.
\begin{teo}\label{teo1}Let $G$ be a definably connected definably compact definable group whose underlying set is a semialgebraic set defined without parameters. Then $G(\mathbb{R})$ is homotopy equivalent to $\mathbb{L}(G)$.
\end{teo}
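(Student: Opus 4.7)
The strategy is to combine the two available comparison theorems with Whitehead's theorem, after producing an actual map between the spaces.

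First, apply the o-minimal triangulation theorem to obtain a definable homeomorphism $h\colon G \to |K|_{\mathcal{R}}$ for some finite simplicial complex $K$. Since $G$ is semialgebraic without parameters, a transfer argument yields also a homeomorphism $G(\mathbb{R}) \cong |K|_{\mathbb{R}}$. The comparison theorem from \cite{07BO} (Fact~\ref{t:compa}) implies that the o-minimal homotopy groups of $|K|_{\mathcal{R}}$ agree with the classical homotopy groups of $|K|_{\mathbb{R}}$, so $\pi_n(G(\mathbb{R}))$ is isomorphic to the $n$-th o-minimal homotopy group of $G$. Combined with the result of \cite{08pBMO}, which identifies the o-minimal homotopy groups of $G$ with $\pi_n(\mathbb{L}(G))$, one obtains abstract isomorphisms $\pi_n(G(\mathbb{R})) \cong \pi_n(\mathbb{L}(G))$ for every $n \geq 0$.

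Second, I would upgrade this to an honest homotopy equivalence via Whitehead's theorem. Both $G(\mathbb{R})$ and $\mathbb{L}(G)$ are compact connected Lie groups, hence finite CW complexes, so it suffices to exhibit a continuous map $f\colon G(\mathbb{R}) \to \mathbb{L}(G)$ inducing the above isomorphisms. The natural candidate comes from the construction in \cite{08pBMO}: their proof of the homotopy group isomorphism should proceed by simplicial approximation against a definable triangulation of $G$, which yields a well-defined map from the simplicial realization $|K|_{\mathbb{R}}$ to $\mathbb{L}(G)$. Composing with the homeomorphism $G(\mathbb{R}) \cong |K|_{\mathbb{R}}$ of the first step produces the desired $f$, and by construction $f$ induces isomorphisms on all $\pi_n$.

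The main obstacle is precisely this last step: extracting an explicit continuous map $|K|_{\mathbb{R}} \to \mathbb{L}(G)$ from the BMO machinery, rather than merely the abstract isomorphism of homotopy groups that they state. Should the BMO argument not directly yield such a map, an alternative is to build $f$ inductively on the skeleta of $G(\mathbb{R})$ via obstruction theory, using the known isomorphism of all $\pi_n$ to conclude that the obstruction cocycles vanish at each stage. In either case, once the map is in hand, Whitehead's theorem closes the argument.
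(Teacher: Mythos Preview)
Your proposal has a genuine gap at precisely the step you flag as the ``main obstacle'': producing a continuous map $f\colon G(\mathbb{R})\to \mathbb{L}(G)$ that induces the given isomorphisms on all $\pi_n$. Neither of your two suggestions closes it. The obstruction-theory route is incorrect as stated: knowing that $\pi_n(G(\mathbb{R}))$ and $\pi_n(\mathbb{L}(G))$ are abstractly isomorphic does not force any obstruction cocycle to vanish, and more to the point it does not let you build a map \emph{realising} a prescribed isomorphism on each $\pi_n$. The standard cautionary example---$\mathbb{R}\mathrm{P}^2\times S^3$ versus $\mathbb{R}\mathrm{P}^3\times S^2$---shows that CW complexes with isomorphic homotopy groups in every degree need not be homotopy equivalent at all. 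As for extracting a map from \cite{08pBMO}: that isomorphism is not obtained by simplicially approximating a map from a triangulation of $G$ into $\mathbb{L}(G)$, so the $f$ you want is not simply sitting there to be read off; you yourself concede this step is speculative. (A minor side point: $G(\mathbb{R})$ is only the realisation of the underlying \emph{set}---the group operation need not be semialgebraic without parameters---so $G(\mathbb{R})$ is a finite CW complex but not, a priori, a Lie group.)

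The paper sidesteps this difficulty entirely by a structural route. It first proves (Theorem~\ref{teo2}) that $G$ is definably homotopy equivalent to $\mathbb{T}_R^d\times G'$, where $d=\dim Z(G)^0$, by exhibiting an explicit definable map and checking it is a weak equivalence via the o-minimal Whitehead theorem. Since the semisimple part $G'$ admits a very good reduction to a parameter-free semialgebraic group $H$, Fact~\ref{t:compa} transfers this to $G(\mathbb{R})\simeq \mathbb{T}_{\mathbb{R}}^d\times H(\mathbb{R})$. On the Lie side, Borel's classical result (Fact~\ref{borel}) gives $\mathbb{L}(G)\cong Z(\mathbb{L}(G))^0\times \mathbb{L}(G)'$, and Lemma~\ref{l:cender} together with \cite{07Be} identifies this with the same space $\mathbb{T}_{\mathbb{R}}^d\times H(\mathbb{R})$. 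Thus both $G(\mathbb{R})$ and $\mathbb{L}(G)$ are matched to a common concrete model, and no direct map between them is ever required.
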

As a consequence of Theorem \ref{teo1} we get the following.
\begin{cor}\label{c:isom}Let $G$ and $H$ be definably connected definably compact definable groups. Then $G$ and $H$ are definable homotopy equivalent if and only if $\mathbb{L}(G)$ and $\mathbb{L}(H)$ are homotopy equivalent.
\end{cor}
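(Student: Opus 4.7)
The plan is to reduce both directions of the equivalence to the semialgebraic, $\emptyset$-definable setting where Theorem~\ref{teo1} applies, and then combine that theorem with the o-minimal homotopy transfer principle (Fact~\ref{t:compa}, from \cite{07BO}).

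First I would show that every definably connected definably compact definable group $G$ in $\mathcal{R}$ is definably isomorphic to a semialgebraic group $G^{*}$ defined without parameters. The natural candidate is built from the associated compact connected Lie group $\mathbb{L}(G)$: by \cite{08HPP}, $\mathbb{L}(G)$ is a compact connected Lie group of dimension $\dim G$, which by the Peter--Weyl theorem embeds as a real algebraic subgroup of some $GL_{n}(\mathbb{R})$, and after an appropriate change of basis the defining polynomials may be taken over $\mathbb{Q}$. This produces $G^{*}$, and a definable isomorphism $G \cong G^{*}$ can then be extracted from the structural theory of definably compact groups. A definable isomorphism automatically gives both a definable homotopy equivalence between $G$ and $G^{*}$ and an isomorphism $\mathbb{L}(G) \cong \mathbb{L}(G^{*})$ of Lie groups, so the reduction is complete for each of $G$ and $H$ separately.

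With this reduction in hand, assume $G$ and $H$ are themselves semialgebraic and defined without parameters. Fact~\ref{t:compa} then yields that $G$ and $H$ are definably homotopy equivalent in $\mathcal{R}$ if and only if $G(\mathbb{R})$ and $H(\mathbb{R})$ are homotopy equivalent as topological spaces, and Theorem~\ref{teo1} gives $G(\mathbb{R}) \simeq \mathbb{L}(G)$ and $H(\mathbb{R}) \simeq \mathbb{L}(H)$. Concatenating these equivalences,
\[
G \text{ and } H \text{ are definably homotopy equivalent} \iff G(\mathbb{R}) \simeq H(\mathbb{R}) \iff \mathbb{L}(G) \simeq \mathbb{L}(H),
\]
which is the statement of the corollary.

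The hard part will be the reduction step, where one must identify an arbitrary definably compact definably connected group with a semialgebraic group defined over $\emptyset$ while respecting both the definable homotopy type and the Lie group assigned by Pillay's conjecture. This requires genuine structural information about such groups, beyond what was used in the proofs of Theorems~\ref{cor:catdefreal} and~\ref{teo1}; once it is available, the rest of the argument is a direct combination of Theorem~\ref{teo1} with the transfer principle from \cite{07BO}.
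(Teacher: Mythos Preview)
Your reduction step is too strong, and this is where the argument breaks. You propose to show that every definably connected definably compact group $G$ is definably \emph{isomorphic as a group} to a semialgebraic group $G^{*}$ defined without parameters, by taking $G^{*}$ to be (an algebraic model of) $\mathbb{L}(G)$ and then extracting a definable isomorphism $G\cong G^{*}$ from ``the structural theory of definably compact groups''. No such theorem is available: for the abelian part this is precisely the algebraicity question for definable tori, which the paper does not assume and does not prove. Indeed, the paper explicitly notes (Remark after Fact~\ref{borel}) the existence of definably compact abelian groups with no proper infinite definable subgroup; for such $G$ there is no evident definable group isomorphism to the standard torus $\mathbb{T}_R^{d}$, and your Peter--Weyl construction of $G^{*}$ together with a hoped-for isomorphism does not go through. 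So the ``hard part'' you identify is not merely hard but, as stated, unproven in the generality you need.

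The paper sidesteps this entirely. Theorem~\ref{teo1} only assumes that the \emph{underlying set} of $G$ is semialgebraic and $\emptyset$-definable; the group multiplication may remain genuinely definable (non-semialgebraic). That hypothesis is obtained for free from the triangulation theorem: replace the underlying set of $G$ by $|K|$ for a simplicial complex $K$ with vertices in $\overline{\mathbb{Q}}$ and transport the group structure along the definable homeomorphism. This changes neither the definable homotopy type of $G$ nor $\mathbb{L}(G)$. Once both $G$ and $H$ sit on $\emptyset$-definable semialgebraic sets, the rest of your argument (Fact~\ref{t:compa} plus Theorem~\ref{teo1}) is exactly what the paper does. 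So keep your second paragraph, but replace the group-isomorphism reduction by the much cheaper triangulation of the underlying set.
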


We point out that the preprint \cite{09pBM} by A. Berarducci and M. Mamino  has recently appeared with similar results to Theorem \ref{teo1} and Corollary \ref{c:isom}. However, the points of view of each paper are different. Here, we obtain the results via a transfer approach. In \cite{09pBM}, the development goes through a new homotopic study of the projection map $\pi:G\rightarrow \mathbb{L}(G)$.
% 
% We point out that the preprint \cite{09pBM} by A. Berarducci and M. Mamino has recently appeared with similar statements to those of Theorem \ref{teo1} and Corollary \ref{c:isom}. However, the point of view of both papers is different. Here, we use the usual transfer approach in o-minimal geometry. There, the development goes through a homotopic study of the projection map $\pi:G\rightarrow \mathbb{L}(G)$.

The results of Section \ref{s:LScat} of this paper are part of the author's Ph.D. dissertation.\\
\textbf{Acknowledgements.} Part of the work for this paper was done while the author
was visiting Universidad de M\'alaga and he would like to thanks Professors Aniceto Murillo and Antonio Viruel for their hospitality. He also thanks Professor Margarita Otero for all the helpful discussions.
\section{Notation and preliminaries}
For the rest of the paper we fix an o-minimal expansion $\mathcal{R}$ of a real closed field $R$. We shall denote by $\mathcal{R}_{sa}$ the field structure of the real closed field $R$. We always take \emph{definable} to mean definable in $\mathcal{R}$ with parameters, except otherwise stated. We take the order topology on $R$ and the product topology on $R^n$ for $n>1$. All definable maps are assumed to be continuous. 
If a set $X$ is definable with parameters in some structure $\mathcal{M}$ we denote by $X(M)$ the realization of $X$ in $\mathcal{M}$.

Given a definable set $S$ and some definable subsets $S_1,\ldots,S_l$ of $S$ we say that $(K,\phi)$ is a \emph{triangulation} in $R^p$ of $S$ partitioning $S_1,\ldots,S_l$ if $K$ is a simplicial complex formed by a finite number of (open) simplices in $R^p$ and $\phi:|K|\rightarrow S$ is a definable homeomorphism, with $|K|=\bigcup_{\sigma\in K}\sigma \subset R^p$ the realization of $K$, such that each $S_i$ is the union of the images by $\phi$ of some simplices of $K$. We say that a simplicial complex $K'$ is a subdivision of a simplicial complex $K$ if each simplex of $K'$ is contained in a simplex of $K$ and  each simplex of $K$ equals the union of finitely many simplices of $K'$. We will use the standard notion  of barycentric subdivision of a simplicial complex (see \cite[Ch.8, \S 1.8]{98vD}).

Now, let us collect some results needed in the sequel. We start with a refinement of the triangulation theorem.
\begin{fact}[\textbf{Normal triangulation theorem}] \emph{\cite[Thm.1.4]{07pB}}\label{teo:triangulacion} Let $K$ be a simplicial complex and let $S_{1},\ldots,S_{l}$ be definable subsets of its realization $|K|$. Then, there is a subdivision $K'$ of $K$ and a definable homeomorphism $\phi':|K'|\rightarrow |K|$ such that\\
\emph{(i)} $(K',\phi')$ partitions all $S_{1},\ldots,S_{l}$ and each $\sigma\in K$,\\
\emph{(ii)} for every $\tau \in K'$ and $\sigma \in K$, if $\tau \subset \sigma$ then $\phi'(\tau)\subset \sigma$.\\
We say that $(K',\phi')$ is a \textbf{normal triangulation} of $|K|$ partitioning the subsets $S_1,\ldots,S_l$.
\end{fact}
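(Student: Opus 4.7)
The plan is to deduce the normal triangulation theorem from the standard o-minimal triangulation theorem by ``correcting'' the resulting triangulation inductively on the skeleton of $K$, so that each simplex $\sigma\in K$ ends up being mapped to itself.

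First I would apply the usual o-minimal triangulation theorem to $|K|$, taking as the partitioning family the definable sets $S_1,\ldots,S_l$ together with every (open) simplex of $K$. This produces a simplicial complex $L$ in some $R^q$ and a definable homeomorphism $\psi\colon|L|\to|K|$ such that each $S_i$ and each $\sigma\in K$ is a union of $\psi$-images of simplices of $L$. In particular, for every $\sigma\in K$, the preimage $\psi^{-1}(\sigma)$ is the realization of a subcomplex $L_\sigma\subset L$, and the restriction $\psi|_{|L_\sigma|}\colon|L_\sigma|\to\sigma$ is a definable homeomorphism. Property (i) is already secured on the level of images; the missing ingredient is (ii): we need a subdivision $K'$ of $K$ itself (rather than an abstract complex $L$) together with a homeomorphism that respects the stratification by simplices of $K$.

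The construction of $(K',\phi')$ then proceeds by induction on $k=\dim\sigma$. On vertices, set $\phi'$ to be the identity. Assuming $(K',\phi')$ has been built on the $(k-1)$-skeleton $|K^{(k-1)}|$, with $\phi'_{k-1}$ sending each simplex of $K'$ lying in $|K^{(k-1)}|$ into its containing simplex of $K$ and realizing the induced partition, consider any $k$-simplex $\sigma\in K$. The boundary $\partial\sigma$ already carries a normal triangulation. To extend it to $\sigma$, pick the barycenter $b_\sigma$ of $\sigma$ and subdivide $\sigma$ as the cone $b_\sigma\ast(K'|_{\partial\sigma})$; independently, coning over a chosen interior point of $|L_\sigma|$ one can refine $L_\sigma$ to a star-shaped complex. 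The homeomorphism $\phi'_{k-1}|_{\partial\sigma}$, composed with $\psi|_{|L_\sigma|}$, then extends piecewise-linearly along these matching cones to a definable homeomorphism $\sigma\to\sigma$, which is our desired $\phi'$ on $\sigma$. A further barycentric-type refinement inside $\sigma$, guided by the simplices of $L_\sigma$, guarantees that the traces $S_i\cap\sigma$ are unions of images of simplices of $K'$, so (i) is preserved.

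The hard part is step 3: producing, for each $\sigma\in K$, a simplicial subdivision of $\sigma$ (coherent with the one already fixed on $\partial\sigma$) together with a definable self-homeomorphism of $\sigma$ that simultaneously (a) extends the one constructed on $\partial\sigma$, (b) is piecewise linear with respect to the subdivision so that condition (ii) holds tautologically, and (c) pulls back the partition $L_\sigma$ to this simplicial subdivision. The cone construction handles (a) and (b), but matching the two star-shaped complexes so that (c) also holds requires care. The usual trick is to perform the extension in two stages: first use the cone to get a normal triangulation of $\sigma$ extending $\partial\sigma$ with no regard to $S_i$'s, then apply the standard triangulation theorem once more inside $\sigma$ — to the family $\{S_i\cap\sigma\}$ together with the already-built simplices of $K'$ in $\sigma$ — and iterate the same straightening argument in one lower relative dimension. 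Definability is preserved throughout because all the maps involved are finite piecewise-linear combinations of definable data.
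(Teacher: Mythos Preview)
The paper does not prove this statement: it is recorded as a \emph{Fact} and cited from \cite[Thm.~1.4]{07pB}, with only the follow-up remark that property (ii) forces $\phi'\sim\id_{|K|}$. So there is no proof in the paper to compare your proposal against.

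As for your sketch itself, the overall strategy --- apply the standard triangulation theorem to $|K|$ partitioning the $S_i$ and the simplices of $K$, then straighten the result into a genuine subdivision of $K$ by induction on the skeleton using cone extensions --- is indeed the natural line and is in the spirit of the argument in \cite{07pB}. However, your handling of the ``hard part'' has a gap. In the final paragraph you propose to first extend over $\sigma$ by the cone ignoring the $S_i$'s, and then re-apply the triangulation theorem inside $\sigma$ to the family $\{S_i\cap\sigma\}$ together with the cone simplices, and ``iterate the same straightening argument in one lower relative dimension.'' But this inner problem is \emph{not} of lower dimension: you are again asking for a normal triangulation of the $k$-dimensional set $\sigma$ that agrees with a prescribed one on $\partial\sigma$, which is exactly the problem you are trying to solve. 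Without an independent argument that this inner step terminates (or a different mechanism for transporting the partition $L_\sigma$ through the cone extension), the induction does not close. You need either a relative version of the triangulation theorem that keeps $\partial\sigma$ fixed, or a direct combinatorial construction of the subdivision of $\sigma$ from $L_\sigma$ and the already-built $K'|_{\partial\sigma}$; the sketch as written does not supply either.
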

\hspace{-0.6cm}Using the notation above, it follows from property (ii) that $\phi'$ is definably homotopic to $\id_{|K|}$ (see the proof of \cite[Thm.1.1]{07pB}). For this reason the normal triangulation theorem is a key tool to prove the following results concerning o-minimal homotopy already  mentioned in the introduction. Let $X$ and $Y$ be semialgebraic sets in $\mathcal{R}_{sa}$ and let $A$ and $B$ be semialgebraic subsets of $X$ and $Y$ respectively. The o-minimal homotopy set $[(X,A),(Y,B)]^{\mathcal{R}}$ is the collection of definable maps $f:X\rightarrow Y$ with $f(A)\subset B$ modulo definable homotopy mapping $A$ in $B$ (see \cite[\S 3]{07BO}).
\begin{fact}\label{t:compa}With the notation above, if $A$ is relatively closed in $X$ then,\\

\vspace{-0.4cm}
\hspace{-0.6cm}\emph{(a)}\emph{\cite[Cor.3.3]{07BO}} the map $[(X,A),(Y,B)]^{\mathcal{R}_{sa}} \rightarrow [(X,A),(Y,B)]^{\mathcal{R}}: [f] \mapsto [f]$ is a bijection,\\

\vspace{-0.4cm}
\hspace{-0.6cm}\emph{(b)}\emph{\cite[Ch.III, Thm.4.1]{85DK}} if $S$ a real closed field extension of $R$ then the map $[(X,A),(Y,B)]^{\mathcal{R}_{sa}} \rightarrow [(X(S),A(S)),(Y(S),B(S))]^{\mathcal{S}_{sa}}: [f] \mapsto [f(S)]$ is a bijection,\\

\vspace{-0.4cm}
\hspace{-0.6cm}\emph{(c)}\emph{\cite[Ch.III, Thm.5.1]{85DK}} if $R=\mathbb{R}$ then $[(X,A),(Y,B)]^{\mathcal{R}_{sa}} \rightarrow [(X,A),(Y,B)]: [f] \mapsto [f]$ is a bijection, where $[(X,A),(Y,B)]$ is the classical homotopy set.
\end{fact}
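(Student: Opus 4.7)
The plan is to handle all three parts through a common mechanism: reduce homotopy sets to data on finite simplicial complexes via triangulation, then run a simplicial approximation argument tailored to each category (o-minimal, semialgebraic over a real closed extension, and classical continuous). Throughout, the Normal Triangulation Theorem (Fact \ref{teo:triangulacion}) will play the role that Lebesgue numbers and uniform continuity play classically. A first reduction, common to all three parts, is to fix semialgebraic triangulations of $(X,A)$ and $(Y,B)$ over $\mathcal{R}_{sa}$ so that $A$ and $B$ are realized by subcomplexes $K_0\subset K$ and $L_0\subset L$; from now on assume $X=|K|$, $Y=|L|$.

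For (a), given a definable map $f:|K|\to|L|$ with $f(|K_0|)\subset|L_0|$, I would consider the definable open cover $\{f^{-1}(\St_L(v)):v\in\V(L)\}$ of $|K|$ and iterate the barycentric subdivision of $K$ to obtain $K^{(n)}$ fine enough that it refines this cover. Since in the o-minimal category there is no metric to invoke directly, the argument is executed via Fact \ref{teo:triangulacion}: take a normal subdivision $(K',\phi')$ of $K^{(n)}$ partitioning the pullback cover, so that each closed simplex of $K'$ lands, via $f\circ\phi'$, inside a single open star of some vertex of $L$. Then the usual simplicial approximation recipe produces a simplicial, hence semialgebraic, map $f_s:|K'|\to|L|$ with $f_s(|K_0'|)\subset|L_0|$, and the straight-line homotopy on each simplex of $L$ gives a definable homotopy from $f\circ\phi'$ to $f_s$ rel the cover; composing with the definable homotopy $\phi'\simeq\id_{|K|}$ guaranteed by property (ii) of Fact \ref{teo:triangulacion} yields a semialgebraic representative in the $\mathcal{R}$-class of $f$. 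Injectivity is the same argument applied to a definable homotopy $H:|K|\times[0,1]\to|L|$ extending two given semialgebraic maps on the endpoints, working relative to $|K|\times\{0,1\}\cup|K_0|\times[0,1]$.

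For (b), the key observation is that a semialgebraic triangulation is specified by finitely many polynomial inequalities with coefficients in $R$, so the pair $(K,L)$ and any simplicial map between them base-change canonically to $S$, producing the pairs $(K(S),L(S))$ with $|K(S)|=|K|(S)$. By (a) applied in both $\mathcal{R}_{sa}$ and $\mathcal{S}_{sa}$ (which, as a Fact cited from \cite{85DK}, does not rest on o-minimal tools but on the classical semialgebraic simplicial approximation), it suffices to show that simplicial homotopy of simplicial maps between finite semialgebraic complexes is preserved and reflected by the base change $R\to S$. But this is a combinatorial statement: two simplicial maps are semialgebraically homotopic iff they are connected by a chain of contiguous simplicial maps on some common subdivision, a condition on the finite combinatorics of the complexes that is invariant under base change.

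For (c), with $R=\mathbb{R}$ the injection of semialgebraic homotopy classes into continuous ones is surjective by the classical simplicial approximation theorem applied to the semialgebraic triangulations of $(X,A)$ and $(Y,B)$: any continuous map is homotopic to a simplicial map on a sufficiently fine subdivision, and a simplicial map between semialgebraic complexes is piecewise linear, hence semialgebraic. Injectivity is the corresponding statement for a continuous homotopy. The main obstacle across all three parts is the o-minimal simplicial approximation lemma used in (a); in the classical and semialgebraic settings the corresponding step uses compactness of $|K|$ and uniform continuity, while in the o-minimal category we do not have these, and it is precisely Fact \ref{teo:triangulacion} that allows us to absorb the continuity of $f$ into a combinatorial refinement of the triangulation.
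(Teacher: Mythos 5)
First, a point of order: the paper does not prove Fact \ref{t:compa} at all --- it is quoted as a known result, with (a) attributed to \cite{07BO} and (b), (c) to \cite{85DK} --- so there is no internal proof to compare yours against. Judged on its own terms, your sketch identifies the right general strategy for (a) (it is indeed the strategy of \cite{07BO}), but as written it has two genuine gaps.

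The first is in (a). Over a non-Archimedean real closed field there is no Lebesgue number lemma, and iterated barycentric subdivision does \emph{not} eventually refine an arbitrary definable open cover: the mesh shrinks only by factors of the form $n/(n+1)$, so it never drops below an infinitesimal fraction of the diameter. Hence the step ``obtain $K^{(n)}$ fine enough that it refines this cover'' is unavailable; this failure is exactly what the normal triangulation theorem is designed to circumvent, so you cannot use both in the way you describe. Moreover, Fact \ref{teo:triangulacion} delivers something weaker than what the textbook simplicial approximation recipe needs: partitioning the sets $f^{-1}(\St_L(v))$ only guarantees that each \emph{open} simplex of $K'$ is carried by $f\circ\phi'$ into a single star, not the star condition $f(\phi'(\St_{K'}(w)))\subset \St_L(v)$ for whole vertex stars of $K'$. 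Without the star condition the vertex assignment need not extend to a simplicial map, and the straight-line homotopy need not stay inside $|L|$ when $L$ is not closed. The argument of \cite{07BO} instead partitions the preimages of the open simplices of $L$ and uses the induced face relations to build the semialgebraic representative; your sketch needs that extra layer.

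The second is in (b). You reduce base change to the statement that two simplicial maps are semialgebraically homotopic iff they become contiguous on a common subdivision, and call this ``finite combinatorics invariant under base change.'' But the classical proof of that equivalence again rests on the Lebesgue number lemma, so invoking it over $R$ and over $S$ is circular (it is essentially the semialgebraic content of (a)/(c)); and a common subdivision produced over $S$ need not descend to $R$. The proof in \cite{85DK} transfers the existence of a semialgebraic homotopy directly: a homotopy described by formulas of bounded complexity is a first-order condition on its parameters, the homotopy set is a filtered union over such complexity bounds, and model completeness of real closed fields gives the bijection. Part (c) is then deduced by combining this transfer with the Archimedean case, where your appeal to classical simplicial approximation is legitimate, modulo the fact that $|K|$ need not be closed or compact --- handled by retracting to the core as in Fact \ref{f:core} --- and the need for a relative version for the pair $(X,A)$.
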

\hspace{-0.6cm}Moreover, if the homotopy sets under consideration are the homotopy groups of a semialgebraic pointed set $(X,x)$, i.e., $\pi_n(X,x)^{\mathcal{R}}=[(I^n,\partial I^n),(X,x)]^{\mathcal{R}}$, then the bijections are isomorphisms (see \cite[\S 4]{07BO}). We shall omit the superscript $\mathcal{R}$ if it is clear from the context.

We finish with the following interesting application of Fact \ref{t:compa} to the study of definable groups which will be used in Section \ref{s:LScatgr}.
\begin{fact}\emph{\cite[Thm.3.7]{08pBMO}}\label{BMO} Let $G$ be a definably connected definably compact group. Then $\pi_n(G)\cong \pi_n(\mathbb{L}(G))$ for all $n\geq 1$.
\end{fact}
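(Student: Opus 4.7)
The plan is to combine a reduction to the semialgebraic, parameter-free case with the transfer results of Fact \ref{t:compa}, and then carry out the final comparison between the classical realization $G_{0}(\mathbb{R})$ and the logic-topology quotient $\mathbb{L}(G_{0})$.

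First I would look for a reduction to the case where the underlying set of $G$ is a semialgebraic set defined without parameters. The existing classification results for definably compact groups (Hrushovski--Peterzil--Pillay and others), together with the normal triangulation apparatus of Fact \ref{teo:triangulacion}, should permit replacing $G$ by such a semialgebraic $G_{0}$ without changing either $\pi_{n}(G)$ or $\mathbb{L}(G)$ up to isomorphism. Once in this situation, Fact \ref{t:compa}(a)--(c) chain together to give
\[
\pi_{n}(G_{0})^{\mathcal{R}}\;\cong\;\pi_{n}(G_{0})^{\mathcal{R}_{sa}}\;\cong\;\pi_{n}(G_{0}(\mathbb{R}))^{\mathbb{R}_{sa}}\;\cong\;\pi_{n}(G_{0}(\mathbb{R})),
\]
reducing the problem to a classical comparison between two compact connected Lie groups of the same dimension: $G_{0}(\mathbb{R})$ and $\mathbb{L}(G_{0})$.

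The main obstacle is this final comparison. My idea is to produce a continuous map between them by composing the projection $\pi:G_{0}\to \mathbb{L}(G_{0})$ with a suitable standard-part construction, and then use that $G_{0}^{00}$ has bounded index and is type-definable in order to argue that the induced map identifies $G_{0}(\mathbb{R})$ with $\mathbb{L}(G_{0})$ up to a map with topologically trivial fibres. A Whitehead-type argument would then upgrade a level-wise isomorphism of homotopy groups into a weak equivalence and, on the CW side, into a genuine homotopy equivalence. The delicate step is controlling the fibres: since $G_{0}^{00}$ is only type-definable and not itself a definable object, it cannot be triangulated directly, and one has to leverage the continuity and closedness of $\pi$ in the logic topology, together with the structure theory of compact connected Lie groups of the right dimension, to extract a usable geometric description of the fibres. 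I expect this is where the real work lies; the transfer machinery in Fact \ref{t:compa} is only a bookkeeping device until that map is in place.
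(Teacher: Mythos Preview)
The paper does not prove this statement at all: Fact~\ref{BMO} is quoted verbatim from \cite[Thm.~3.7]{08pBMO} and used as a black box. So there is no ``paper's own proof'' to compare against; your proposal is an attempt to supply one.

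That said, your outline has two genuine problems. First, a technical slip: after reducing to a semialgebraic, parameter-free underlying set you call $G_{0}(\mathbb{R})$ a ``compact connected Lie group''. Only the \emph{underlying set} of $G$ can be made semialgebraic without parameters (via triangulation); the group multiplication need not be semialgebraic, and in general $G_{0}(\mathbb{R})$ carries no group structure whatsoever. It is merely a compact connected space, so any argument that relies on Lie-theoretic structure on $G_{0}(\mathbb{R})$ is unfounded.

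Second, and more seriously, your ``final comparison'' between $G_{0}(\mathbb{R})$ and $\mathbb{L}(G_{0})$ is precisely Theorem~\ref{teo1} of the present paper, and in the paper's logical order Theorem~\ref{teo1} is proved \emph{using} Fact~\ref{BMO} (through Corollary~\ref{c:grhom} and Theorem~\ref{teo2}). So you cannot borrow that route without circularity. You instead propose to study the projection $\pi:G\to\mathbb{L}(G)$ directly and show its fibres are homotopically trivial; this is exactly the alternative programme the paper attributes to \cite{09pBM}, and it is a substantial piece of work in its own right (the fibres are cosets of the type-definable $G^{00}$, and one must show these are, in an appropriate sense, contractible). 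Your proposal acknowledges this is ``where the real work lies'' but offers no mechanism for it; until that step is supplied, the argument is a restatement of the problem rather than a proof.
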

\section{o-Minimal LS-category of definable sets}\label{s:LScat}
In this section we introduce the Lusternik-Schnirelmann category (in short LS-category) for definable sets. We apply the results of o-minimal homotopy from \cite{07BO} and the normal triangulation theorem to prove some comparison theorems concerning the LS-category. For a general reference on the classical Lusternik-Schnirelmann category see \cite{03CLOT}.
\begin{deff}Let $X$ be a definable set. We say that a definable subset $A$ of $X$ is \textbf{definably categorical} in $X$ if $A$ is definably contractible to a point in $X$ (not necessarily definably contractible in itself). We say that a definable cover $\{ V_i\}_{i=1}^{m}$ of $X$ is a \textbf{definable categorical cover} of $X$ if each $V_i$ is definably categorical  in $X$.
\end{deff}
% \begin{fact}\emph{\cite[Lem.1.29]{03CLOT}}\label{cathomot} Let $X$ and $Y$ be definable sets. Let $f:X\rightarrow Y$ and $g:Y\rightarrow X$ be definable maps such that $f\circ g \sim id_Y$. Then $g^{-1}(U)$ is a definable categorical subset of $Y$ for each definable categorical subset $U$ of $X$.\end{fact}
% \begin{proof}Let $F:Y\times I\rightarrow Y$  be a definable homotopy from $\id_Y$ to $f\circ g$. Let $H:U\times I\rightarrow X$ be a definable homotopy and $x_0\in X$ such that $H(x,0)=x$ for all $x\in U$ and $H(x,1)=x_0$ for all $x\in U$. Let $V=g^{-1}(U)$ and consider the definable map $G:V\times I\rightarrow Y$ defined by 
% \begin{displaymath}G(y,t)=\left\{
% \begin{array}{l}
% F(y,2t) \text{  \  \ for all  \ \ } (y,t)\in V\times [0,\frac{1}{2}],\\
% f(H(g(y),2t-1)) \text{  \  \ for all  \ \ } (y,t)\in V\times [\frac{1}{2},1].\\
% \end{array}\right.
% \end{displaymath}
% Note that $G(y,0)=y$ and $G(y,1)=f(x_0)$ for all $y\in V$, i.e., $V$ is a definable categorical subset of $Y$.\end{proof}

\begin{fact}\emph{\cite[Lem.1.29]{03CLOT}}\label{cathomot} Let $X$ and $Y$ be definable sets. Let $f:X\rightarrow Y$ and $g:Y\rightarrow X$ be definable maps such that $g\circ f \sim id_X$. Then $f^{-1}(V)$ is a definable categorical subset of $X$ for each definable categorical subset $V$ of $Y$.\end{fact}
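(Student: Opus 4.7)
The plan is to mimic the classical proof of CLOT Lemma 1.29 word for word, since all the operations involved (composition, restriction, concatenation of homotopies) preserve definability. The only thing to check is that at each step we stay inside the definable category.

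First I would unpack the hypothesis: since $V\subset Y$ is definably categorical in $Y$, there exists a definable homotopy $H:V\times I\to Y$ with $H_{0}$ equal to the inclusion $V\hookrightarrow Y$ and $H_{1}$ the constant map at some $y_{0}\in Y$. Set $A:=f^{-1}(V)$; the goal is to produce a definable homotopy in $X$ from the inclusion $A\hookrightarrow X$ to a constant map.

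Next I would build the homotopy in two pieces. The first piece is $g\circ H\circ(f|_{A}\times\id_{I}):A\times I\to X$, which is definable and deforms $g\circ f|_{A}$ to the constant map at $g(y_{0})$. The second piece comes from the hypothesis $g\circ f\sim \id_{X}$: choose a definable homotopy $K:X\times I\to X$ with $K_{0}=\id_{X}$ and $K_{1}=g\circ f$, and restrict it to $A\times I$ to obtain a definable homotopy from $A\hookrightarrow X$ to $g\circ f|_{A}$. Finally, I would concatenate these two homotopies in the usual way, namely
\[
F(x,t)=\begin{cases} K(x,2t) & 0\le t\le 1/2,\\ g(H(f(x),2t-1)) & 1/2\le t\le 1,\end{cases}
\]
which is well-defined and continuous since both pieces agree at $t=1/2$ (both equal $g\circ f(x)$). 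Being piecewise definable with a definable gluing locus, $F$ is itself definable, and it witnesses that $A$ is definably contractible to the point $g(y_{0})$ in $X$.

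There is no real obstacle here; this is a routine transposition of the classical argument, and the only point to be careful about is verifying that the concatenation of two definable homotopies remains definable, which is immediate from the piecewise description above. No appeal to the normal triangulation theorem or to the transfer results of Fact \ref{t:compa} is needed for this lemma.
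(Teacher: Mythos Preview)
Your proof is correct and essentially identical to the paper's own argument: the paper also concatenates the restriction of a definable homotopy $\id_X\sim g\circ f$ with the homotopy $g(H(f(x),\cdot))$ to contract $f^{-1}(V)$ to $g(y_0)$, using the same piecewise formula you wrote down.
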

\begin{proof}Let $F:X\times I\rightarrow X$  be a definable homotopy from $\id_X$ to $g\circ f$. Let $y_0\in Y$ and let $H:V\times I\rightarrow Y$ be a definable homotopy  such that $H(y,0)=y$ for all $y\in V$ and $H(y,1)=y_0$ for all $y\in V$. Denote by $U=f^{-1}(V)$ and consider the definable map $G:U\times I\rightarrow X$ defined by 
\begin{displaymath}G(x,t)=\left\{
\begin{array}{l}
F(x,2t) \text{  \  \ for all  \ \ } (x,t)\in U\times [0,\frac{1}{2}],\\
g(H(f(x),2t-1)) \text{  \  \ for all  \ \ } (x,t)\in U\times [\frac{1}{2},1].\\
\end{array}\right.
\end{displaymath}
Note that $G(x,0)=x$ and $G(x,1)=g(x_0)$ for all $x\in U$, i.e., $U$ is a definable categorical subset of $X$.\end{proof}

Every definable set $X$ has a definable categorical open cover. Indeed, by the triangulation theorem and Fact \ref{cathomot} we can assume that $X=|K|$ for a simplicial complex $K$. We denote by $K'$ the first barycentric subdivision of $K$. By \cite[Prop III.1.6]{85DK}, the open definable subset $\St_{K'}(v)$ of $|K|$ is definably categorical in $|K|$ for each $v\in \V(K)\cap |K|$. Therefore, $\{\St_{K'}(v):v\in \V(K)\cap |K|\}$ is a finite definable categorical open cover of $|K|$. Alternatively, every definable set $X$ is a finite union of open cells (which are already definably contractible).
\begin{deff}The \textbf{o-minimal LS-category} of a definable set $X$, denoted by $\catc(X)^{\mathcal{R}}$, is the least integer $m$ such that $X$ has a definable categorical open cover of $m+1$ elements. 
\end{deff}
For example, by definition we have that a definable set $X$ is definably contractible if and only if $\cat(X)^{\mathcal{R}}=0$. Now, we prove that the o-minimal LS-category is homotopy invariant.
\begin{fact}\emph{\cite[Lem.1.30]{03CLOT}}\label{f:catinv} Let $X$ and $Y$ be definable homotopy equivalent definable sets. Then $\catc(X)^{\mathcal{R}}=\catc(Y)^{\mathcal{R}}$.
\end{fact}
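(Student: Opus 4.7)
The plan is to deduce this directly from Fact \ref{cathomot} by a standard pullback-of-covers argument, applied symmetrically via the two maps of a definable homotopy equivalence.

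First I would fix definable maps $f:X\to Y$ and $g:Y\to X$ with $g\circ f \sim \id_X$ and $f\circ g \sim \id_Y$, and set $m=\cat(Y)^{\mathcal R}$. Then I would take a definable categorical open cover $\{V_0,\ldots,V_m\}$ of $Y$ witnessing this value. The key step is to form the pullback family $U_i := f^{-1}(V_i)$. Since $f$ is (continuous and) definable, each $U_i$ is a definable open subset of $X$, and $\{U_0,\ldots,U_m\}$ covers $X$ because $\{V_i\}$ covers $Y$. By Fact \ref{cathomot}, applied to the composition $g\circ f\sim \id_X$ and to each definably categorical $V_i$, every $U_i = f^{-1}(V_i)$ is definably categorical in $X$. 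Hence $\{U_i\}$ is a definable categorical open cover of $X$ with $m+1$ members, yielding $\cat(X)^{\mathcal R}\le m = \cat(Y)^{\mathcal R}$.

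Next I would apply the symmetric argument, swapping the roles of $X$ and $Y$ and using the other homotopy $f\circ g\sim \id_Y$: pull back a minimal definable categorical open cover of $X$ via $g$, invoke Fact \ref{cathomot} once more, and conclude $\cat(Y)^{\mathcal R}\le \cat(X)^{\mathcal R}$. Combining the two inequalities gives the equality.

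There is essentially no obstacle here: Fact \ref{cathomot} has already done the real work of producing the definable homotopy from $\id_U$ to a constant map on $U=f^{-1}(V)$ (by splicing the homotopy $\id_X\sim g\circ f$ with the contraction of $V$ pushed down by $g$). The only routine checks left are that preimages of definable open sets under definable maps are definable and open, and that the $U_i$ still cover $X$, both immediate from continuity and surjectivity of the cover on the target side. No additional triangulation or transfer machinery is needed for this statement.
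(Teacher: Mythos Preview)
Your proposal is correct and follows essentially the same approach as the paper: both fix a definable homotopy equivalence $(f,g)$, pull back a minimal definable categorical open cover along one of the maps, invoke Fact~\ref{cathomot} to see the preimages remain definably categorical, and then appeal to symmetry for the reverse inequality. The only cosmetic difference is that the paper first proves $\cat(Y)^{\mathcal R}\le \cat(X)^{\mathcal R}$ by pulling back along $g$, whereas you first prove $\cat(X)^{\mathcal R}\le \cat(Y)^{\mathcal R}$ by pulling back along $f$.
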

\begin{proof}Let $f:X\rightarrow Y$ and $g:Y\rightarrow X$ be definable maps such that $f\circ g\thicksim \id_{Y}$ and $g\circ f\thicksim \id_{X}$. We show that $\cat(Y)^{\mathcal{R}}\leq \cat(X)^{\mathcal{R}}$, the other inequality by symmetry. Let $\{U_i\}^{m+1}_{i=1}$ be a definable categorical open cover of $X$. By Fact \ref{cathomot}, $g^{-1}(U_i)$ is definably categorical in $Y$ for each $i=1,\ldots,m+1$. Hence $\{g^{-1}(U_i)\}^{m+1}_{i=1}$ is a categorical open cover of $Y$.
\end{proof}
\begin{teo}\label{t:catcomp1}Let $X\subset R^n$ be a semialgebraic set. Then $\catc(X)^{\mathcal{R}_{sa}}=\catc(X)^{\mathcal{R}}$.
\end{teo}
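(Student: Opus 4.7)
The plan is to establish the two inequalities $\cat(X)^{\mathcal{R}} \leq \cat(X)^{\mathcal{R}_{sa}}$ and $\cat(X)^{\mathcal{R}_{sa}} \leq \cat(X)^{\mathcal{R}}$. The first is immediate: any semialgebraic categorical open cover is \emph{ipso facto} a definable categorical open cover, so the minimum in the definable setting is no larger. All the genuine content lies in the reverse inequality, where one must upgrade a definable categorical cover of size $m+1 := \cat(X)^{\mathcal{R}}+1$ to a semialgebraic one of the same cardinality.

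My first move is to reduce to the case $X=|K|$ for a semialgebraic simplicial complex $K$: apply the semialgebraic triangulation theorem to $X$ to obtain a semialgebraic homeomorphism $|K|\to X$, and note that Fact \ref{f:catinv} applies both in the semialgebraic and the definable setting, so both $\cat(X)^{\mathcal{R}_{sa}}$ and $\cat(X)^{\mathcal{R}}$ equal their counterparts for $|K|$. Next, given a definable categorical open cover $\{V_1,\ldots,V_{m+1}\}$ of $|K|$, I would apply the normal triangulation theorem (Fact \ref{teo:triangulacion}) with the $V_i$'s as the distinguished subsets. This yields a subdivision $K^*$ of $K$ together with a definable homeomorphism $\phi^*\colon |K^*|\to |K|$ which partitions every $V_i$, and which (by the remark following the statement of Fact \ref{teo:triangulacion}) is definably homotopic to $\id_{|K|}$.

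The key point is then to exploit the fact that $V_i' := (\phi^*)^{-1}(V_i)$ is automatically semialgebraic: by the ``partition'' property it is a \emph{finite} union of open affine simplices of $K^*$ in $R^p$, each of which is semialgebraic with parameters in $R$, and hence so is $V_i'$. Moreover $V_i'$ is open, the family $\{V_i'\}$ covers $|K|$, and each $V_i'$ is definably categorical in $|K|$: indeed, applying Fact \ref{cathomot} with $f=\phi^*$ and $g=(\phi^*)^{-1}$, one has $g\circ f=\id_{|K|}\sim \id_{|K|}$, so pre-images of definably categorical subsets under $\phi^*$ remain definably categorical.

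The last step is to promote definable contractibility of $V_i'$ in $|K|$ to semialgebraic contractibility. This is exactly where Fact \ref{t:compa}(a) is used: with $A=B=\emptyset$, the natural map $[V_i',|K|]^{\mathcal{R}_{sa}}\to [V_i',|K|]^{\mathcal{R}}$ is a bijection. The inclusion $V_i'\hookrightarrow |K|$ and a constant map $V_i'\to\{x_i\}\subset|K|$ are both \emph{semialgebraic} representatives, and they are definably homotopic because $V_i'$ is definably categorical in $|K|$; injectivity of the bijection forces them to be semialgebraically homotopic as well. Hence $\{V_i'\}_{i=1}^{m+1}$ is a semialgebraic categorical open cover, giving $\cat(X)^{\mathcal{R}_{sa}}\leq m=\cat(X)^{\mathcal{R}}$. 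The main obstacle I expect is conceptual rather than technical: the given $V_i$'s need not be semialgebraic, and the trick of ``semialgebraicizing'' them by pulling back through the normal triangulation, while relying on Fact \ref{t:compa}(a) to transfer the contractions, is the whole crux of the argument.
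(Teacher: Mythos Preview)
Your argument is correct and matches the paper's proof essentially line for line: triangulate, apply the normal triangulation theorem to the definable cover, observe that the preimages are realizations of subcomplexes of the subdivision (hence semialgebraic), use Fact~\ref{cathomot} to see they are definably categorical, and then invoke Fact~\ref{t:compa}(a) to upgrade to semialgebraic contractibility. The only cosmetic difference is that the paper appeals to $\phi^*$ being a definable homeomorphism (rather than to $\phi^*\sim\id_{|K|}$) when applying Fact~\ref{cathomot}, and it makes explicit the point that $|K^*|=|K|$ because $K^*$ is a subdivision---which is precisely why the normal triangulation theorem is needed instead of the ordinary one.
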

\begin{proof}Clearly, $\cat(X)^{\mathcal{R}_{sa}}\geq \cat(X)^{\mathcal{R}}$. We show that $\cat(X)^{\mathcal{R}_{sa}}\leq \cat(X)^{\mathcal{R}}$. By the Triangulation theorem we can assume that $X=|K|$ for some simplicial complex $K$. Let $m=\cat(X)^{\mathcal{R}}$ and let $U_1,\ldots,U_{m+1}$ be a definable categorical open cover of $|K|$. By the normal triangulation theorem \ref{teo:triangulacion} there is a subdivision $K'$ of $K$ and a definable homeomorphism $\phi:|K'|\rightarrow |K|$ such that $(K',\phi)$ partitions $U_1,\ldots,U_{m+1}$. Note that since $K'$ is a subdivision of $K$ we have $|K'|=|K|$ (this is the reason why we use the normal triangulation theorem instead of the standard one). Therefore the open subset $V_i:=\phi^{-1}(U_i)$ is the realization of a subcomplex of $K'$ and hence a semialgebraic subset of $|K'|$ for each $i=1,\ldots,m+1$. Since $\phi$ is a definable homeomorphism, $V_i$ is a definable categorical subset of $|K|$ for all $i=1,\ldots,m+1$ (see Fact \ref{cathomot}). Now, by Fact \ref{t:compa}.(a) if a semialgebraic subset $A$ of a semialgebraic set $B$ is definably contractible in $B$, then it is semialgebraically contractible in $B$. Hence, $V_i$ is a semialgebraic categorical subset of $|K|$ for all $i=1,\ldots,m+1$. Then $\{V_i\}_{i=1}^{m+1}$ is a semialgebraic categorical open cover of $|K|$ and hence $\cat(X)^{\mathcal{R}_{sa}}\leq m$.
\end{proof}
%\begin{obs}The use 
%\end{obs}
\begin{teo}\label{t:catcomp2}Let $X\subset R^n$ be a semialgebraic set. Let $S$ be a real closed field extension of $R$. Then $\catc(X)^{\mathcal{R}_{sa}}=\catc(X(S))^{\mathcal{S}_{sa}}$. 
\end{teo}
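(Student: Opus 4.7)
The plan is to mirror the proof of Theorem \ref{t:catcomp1}, invoking Fact \ref{t:compa}.(b) in place of Fact \ref{t:compa}.(a), with an additional ingredient: a model-theoretic transfer of the combinatorial data of a simplicial subdivision. After reducing by the Triangulation theorem to the case $X = |K|$ for a simplicial complex $K$ defined over $R$ (so $X(S)$ is the realization of $K$ in $S^n$), the inequality $\cat(X(S))^{\mathcal{S}_{sa}} \leq \cat(X)^{\mathcal{R}_{sa}}$ is immediate: given a semialgebraic categorical open cover $\{U_i\}_{i=1}^{m+1}$ of $X$ over $R$ with contractions $H_i\colon U_i \times [0,1] \to X$, the family $\{U_i(S)\}$ with contractions $H_i(S)$ is a semialgebraic categorical open cover of $X(S)$.

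For the converse, set $m = \cat(X(S))^{\mathcal{S}_{sa}}$ and let $\{V_i\}_{i=1}^{m+1}$ be a semialgebraic categorical open cover of $X(S)$ over $S$. I would apply the normal triangulation theorem inside $\mathcal{S}_{sa}$ to obtain a subdivision $K''$ of $K$ (with vertices in $S^n$) and a semialgebraic homeomorphism $\psi\colon |K''| \to X(S)$ partitioning the $V_i$. Each $V'_i := \psi^{-1}(V_i)$ is then a union of open simplices of $K''$ and, by Fact \ref{cathomot} applied to $\psi$ and $\psi^{-1}$, is categorical in $|K''| = X(S)$.

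The next step is to transfer the abstract combinatorial structure of $K''$ back to $R$. The condition ``the tuple $\vec{w}$ gives a subdivision of $K$ realizing a prescribed abstract simplicial structure, with each vertex lying on the prescribed face of $K$'' is a quantifier-free semialgebraic formula in the coordinates of $\vec{w}$, with parameters from the vertices of $K$ (which lie in $R$). Since this formula is satisfied in $S$ by the vertices of $K''$, the fact that $R$ is existentially closed in $S$ (model completeness of real closed fields) produces a combinatorially equivalent subdivision $K'''$ of $K$ with vertices in $R^n$. Let $W_i \subseteq |K'''| = |K|$ be the union of those simplices of $K'''$ which correspond to the simplices of $K''$ composing $V'_i$; openness of the $W_i$ and the covering of $|K|$ are purely combinatorial properties, so $\{W_i\}_{i=1}^{m+1}$ is a semialgebraic open cover of $X$ over $R$.

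Finally, to show each $W_i$ is semialgebraically categorical in $X$, I would introduce the piecewise-affine semialgebraic self-homeomorphism $\eta\colon X(S) \to X(S)$ sending each simplex of $K'''$ (viewed in $S^n$) affinely onto the corresponding simplex of $K''$; then $\eta(W_i(S)) = V'_i$, so Fact \ref{cathomot} applied to $\eta$ and $\eta^{-1}$ gives that $W_i(S)$ is categorical in $X(S)$. Now Fact \ref{t:compa}.(b) (with empty subspaces) supplies a bijection $[W_i, X]^{\mathcal{R}_{sa}} \to [W_i(S), X(S)]^{\mathcal{S}_{sa}}$ carrying the inclusion and the constant map to their $S$-realizations, so by injectivity the inclusion $W_i \hookrightarrow X$ is null-homotopic in $\mathcal{R}_{sa}$ and thus $\cat(X)^{\mathcal{R}_{sa}} \leq m$. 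The principal obstacle will be the transfer step: one must verify that ``being a valid subdivision of $K$ with prescribed abstract combinatorial type and face incidences'' is genuinely expressible by a quantifier-free semialgebraic formula in the candidate vertex coordinates, which should follow from the standard convexity descriptions of non-degenerate simplices and their incidence lattice.
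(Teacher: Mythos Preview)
Your argument is essentially correct, but the route differs from the paper's, and there is one terminological slip worth flagging.

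The slip: the subdivision condition you write down is not literally quantifier-free in the vertex coordinates~$\vec w$. Saying that the open simplices of the candidate complex are pairwise disjoint and that their union exhausts $|K|$ involves universal quantifiers over points of $|K|$. This does no damage to the argument, since by model completeness of real closed fields one has $R\prec S$, so \emph{any} first-order condition with parameters in $R$ that is satisfied in $S$ is already satisfied in $R$; you do not need the formula to be quantifier-free for the transfer step to go through. You should also note (as the paper does explicitly) that when applying Fact~\ref{t:compa}.(b) at the end, the constant map you compare the inclusion to must be the $S$-realization of an $R$-map, i.e.\ the constant value must lie in $X$; this is harmless since every semialgebraic connected component of $X(S)$ is the $S$-realization of a component of $X$ and hence contains $R$-points.

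As for the comparison: the paper does \emph{not} use the normal triangulation theorem here, nor does it transfer a subdivision of a fixed $K$. Instead, it triangulates $X(S)$ over $S$ partitioning the $V_i$, realizes the resulting abstract simplicial complex with vertices in $R$, and invokes Fact~\ref{f:catinv} (homotopy invariance of the category) to replace $X(S)$ by this new realization $|K|$; the $V_i$ then become subcomplexes of $K$ defined over $R$, and Fact~\ref{t:compa}.(b) finishes as in your proof. The paper's approach is shorter and avoids both the normal triangulation and the explicit model-theoretic transfer of the subdivision, at the cost of working only up to homotopy equivalence rather than on the original space. Your approach has the virtue of keeping $X=|K|$ fixed throughout and producing the categorical cover of the original $X$ directly, and it makes the transfer mechanism completely explicit.
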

\begin{proof}It is immediate that $\cat(X)^{\mathcal{R}_{sa}}\geq\cat(X(S))^{\mathcal{S}_{sa}}$. For, given a semialgebraic categorical open cover $\{U_i\}_{i=1}^{m+1}$ of $X$, $\{U_i(S)\}_{i=1}^{m+1}$ is clearly a semialgebraic categorical open cover of $X(S)$. We show that $\cat(X)^{\mathcal{R}_{sa}}\leq\cat(X(S))^{\mathcal{S}_{sa}}$. Let $\{V_i\}_{i=1}^{m+1}$ be a semialgebraic categorical open cover of $X(S)$. Let $$H_i:V_i\times I\rightarrow X(S)$$ be a semialgebraic homotopy such that $H_i(x,0)=x$ for all $x\in V_i$ and $H_i(x,1)=x_i\in X(S)$ for all $x\in V_i$ for each $i=1,\ldots,m+1$. Without loss of generality we can assume that $x_i\in R^n$. Indeed, let $x'_i\in X(S)\cap R^n$ be a point which lies in the same semialgebraic connected component of $x_i$. Consider a semialgebraic curve $\alpha_i:I\rightarrow X(S)$ such that $\alpha_i(0)=x_i$ and $\alpha_i(1)=x'_i$. Then, we can replace $H_i$ by the semialgebraic homotopy $H'_i:V_i\times I\rightarrow X(S)$ with $H_i'(x,t)=H_i(x,2t)$ for all $(x,t)\in V_i\times [0,\frac{1}{2}]$ and $H_i'(x,t)=\alpha_i(2t-1)$ for all $(x,t)\in V_i\times [\frac{1}{2},1]$. 

On the other hand, by the triangulation theorem and Fact \ref{f:catinv} we can assume that $X(S)=|K|$ for a simplicial complex $K$ in $S$ whose vertices lie in $R$ as well as the $V_i$'s are realizations of subcomplexes of $K$ semialgebraically categorical in $|K|$ (possibly with parameters from $S$). Furthermore, by Fact \ref{t:compa}.(b) the $V_i$'s are also semialgebraically categorical in $|K|$ with parameters from $R$ and therefore $\{V_{i}(R)\}_{i=1}^{m+1}$ is a semialgebraic categorical open cover of $X$. Hence, $\cat(X)^{\mathcal{R}_{sa}}\leq\cat(X(S))^{\mathcal{S}_{sa}}$, as required.
%We denote $V_i$ and $H_i$ by $V_{i,c}$ and $H_{i,c}$ respectively to stress the fact that $c$ is a tuple of parameters of $S$ such that $V_i$ and $H_i$ are defined over, for all $i=1,\ldots,m+1$. Consider the first order formula $\psi(y)$ with parameters over $R$ which says that $\{V_{i,y}\}_{i=1}^{m+1}$ is a semialgebraic open cover of $X$ and each $H_{i,y}:V_{i,y}\times I\rightarrow X$ is a semialgebraic map such that $H_{i,y}(x,0)=x$ and $H_{i,y}(x,1)=x_i$ for all $x\in V_{i,y}$. By completeness of the theory of real closed fields, since $\mathcal{S}_{sa}$ satisfies $\exists y\psi(y)$, $\mathcal{R}_{sa}$ satisfies $\exists y\psi(y)$. Hence, there is a tuple $b$ in $R$ such that $\{V_{i,b}(R)\}_{i=1}^{m+1}$ is a semialgebraic categorical open cover of $X$. Hence, $\cat(X)^{\mathcal{R}_{sa}}\leq\cat(X(S))^{\mathcal{S}_{sa}}$, as required.
\end{proof}
The following fact allows us to work with closed simplicial complexes in the proof of Theorem \ref{t:catcomp3}.
\begin{fact}\emph{\cite[Ch.III, Prop.1.6, 1.8]{85DK}}\label{f:core} Let $K$ be the first barycentric subdivision of some simplicial complex. Let $co(K)$ be the closed simplicial subcomplex of $K$ consisting in all simplexes of $K$ whose faces are also simplexes of $K$. Then there is a semialgebraic retraction $r:|K|\rightarrow |\coc(K)|$ such that $(1-t)x+t\cdot r(x)\in |K|$ for all $(x,t)\in |K|\times I$ and hence  $H:|K|\times I\rightarrow |K|:(x,t)\mapsto (1-t)x+t\cdot r(x)$ is a canonical semialgebraic strong deformation retraction.
\end{fact}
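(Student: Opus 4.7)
The plan is to exploit the canonical chain structure of a first barycentric subdivision to build $r$ explicitly. Writing $K$ as the first barycentric subdivision of some complex $L$, each closed simplex $\overline{\tau}$ of $K$ is the convex hull of barycenters $b_{\sigma_0},\ldots,b_{\sigma_p}$ indexed by a chain $\sigma_0\subsetneq\cdots\subsetneq\sigma_p$ of simplices of (the closure of) $L$. The faces of $\overline{\tau}$ correspond bijectively to sub-chains of this chain, so $\tau\in \coc(K)$ precisely when every sub-chain of its defining chain also indexes a simplex of $K$; the ``missing'' faces of $\overline\tau$ are exactly those indexed by the sub-chains that drop out of $K$.

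Next, I would construct $r$ by a local, piecewise-linear recipe on each $\overline{\tau}$. Call a vertex $b_{\sigma_i}$ of $\tau$ \emph{bad} if some sub-chain through it is missing from $K$, and \emph{good} otherwise; the good vertices always span a face of $\overline{\tau}$ lying in $|\coc(K)|$. Define $r$ on $\overline{\tau}$ by collapsing each bad vertex linearly onto a canonical good neighbour along the chain, and extending in barycentric coordinates. Because the bad/good classification depends only on local data of $L$ near the vertex, and not on the ambient simplex, these local definitions agree on common faces of adjacent simplices and glue to a continuous, semialgebraic retraction $r:|K|\to|\coc(K)|$.

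The delicate point, and the main obstacle, is verifying the containment $(1-t)x+t\,r(x)\in|K|$ for every $t\in I$. On a single closed simplex the segment lies in $\overline{\tau}$ by convexity, so what must be excluded is that it passes through a missing face of $\overline{\tau}$. This is precisely where the first-barycentric hypothesis is essential: in the chain description, moving linearly from $x$ toward $r(x)$ monotonically decreases the barycentric weight of each bad vertex while increasing those of the good ones, so at every time $t\in(0,1]$ the resulting point has at least the good-support of $r(x)$ and therefore lies in an open simplex of $K$. The verification thus reduces to an inspection of how the barycentric coordinates evolve along the segment, together with a combinatorial check that the good-vertex targets are compatible across gluings; without the barycentric subdivision assumption there would be no canonical good-vertex target, and the analogous straight-line homotopy could easily leave $|K|$.
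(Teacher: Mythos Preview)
The paper does not actually prove this statement---it is quoted as a fact from Delfs--Knebusch with no argument given---so there is no in-paper proof to compare against. Your overall strategy (use the chain description of $K=\mathrm{sd}(L)$, project each closed simplex onto a face sitting in $|\coc(K)|$, then take the straight-line homotopy) is the right one, but the execution has a genuine gap.

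Your good/bad criterion is not intrinsic to the vertex. Recall that an open simplex of $\mathrm{sd}(\overline{L})$ with chain $\sigma_0\subsetneq\cdots\subsetneq\sigma_p$ lies in $K$ iff its top element $\sigma_p$ belongs to $L$. With your definition (``$b_{\sigma_i}$ is bad in $\tau$ if some sub-chain of $\tau$'s chain through $\sigma_i$ is missing from $K$''), $b_{\sigma_i}$ is bad whenever some $\sigma_j\notin L$ with $j\ge i$ appears in the chain of $\tau$. The same vertex can then be good in one simplex and bad in an adjacent one, so your claim that ``the bad/good classification depends only on local data of $L$ near the vertex, and not on the ambient simplex'' is false as stated, and your locally defined maps need not glue. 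The phrase ``collapsing each bad vertex onto a canonical good neighbour along the chain'' is also too vague to pin down $r$; there is no single canonical target vertex.

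The fix is simple: declare $b_{\sigma}$ \emph{good} iff $\sigma\in L$, equivalently iff the $0$-simplex $\{b_\sigma\}$ lies in $K$. This is manifestly independent of any ambient simplex. For $\tau\in K$ the top vertex $b_{\sigma_p}$ is always good, and the good vertices of $\overline{\tau}$ span a face all of whose subfaces have top element in $L$, hence a face in $\coc(K)$. Define $r$ on $\overline{\tau}$ as the radial projection away from the bad face: zero out the bad barycentric coordinates and renormalise. This glues because good/bad is intrinsic. For the homotopy, observe that for $0\le t<1$ the point $(1-t)x+t\,r(x)$ has exactly the same barycentric support as $x$ (all originally positive coordinates stay positive), so it lies in the \emph{same} open simplex $\tau\in K$; at $t=1$ it is $r(x)\in|\coc(K)|$. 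Your final paragraph was close, but ``has at least the good-support of $r(x)$'' is not the relevant fact---what matters is that the support does not shrink before $t=1$.
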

\begin{teo}\label{t:catcomp3}Let $X\subset \mathbb{R}^n$ be a semialgebraic set. Then $\catc(X)^{\mathbb{R}_{sa}}=\catc(X)$, where $\catc(X)$ denotes the classical LS-category of $X$.
\end{teo}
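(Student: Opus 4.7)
The inequality $\cat(X)^{\mathbb{R}_{sa}} \geq \cat(X)$ is immediate, since semialgebraic maps and homotopies are in particular continuous, so every semialgebraic categorical open cover is also a classical one. For the reverse inequality, my plan is to reduce to the case of a compact polyhedron and then convert a classical categorical open cover into a semialgebraic one of the same cardinality by a standard star-coloring argument, transferring the contractibility of each member via Fact \ref{t:compa}.(c).

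For the reduction, the triangulation theorem yields a semialgebraic homeomorphism $|K| \to X$ for some simplicial complex $K$. Replacing $K$ by its first barycentric subdivision and invoking Fact \ref{f:core}, I obtain a semialgebraic (and hence also classical) strong deformation retract of $|K|$ onto the closed subcomplex $|\co(K)|$, which is a finite union of closed bounded simplices and is therefore compact. By Fact \ref{f:catinv} together with its classical counterpart I may then replace $X$ by this compact polyhedron.

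For the refinement, let $\{U_i\}_{i=1}^{m+1}$ be a classical categorical open cover of $X$ with $m=\cat(X)$. By the Lebesgue number lemma on the compact metric space $X$, there is $\delta>0$ such that every subset of $X$ of diameter less than $\delta$ lies in some $U_i$. Taking enough iterated barycentric subdivisions of $\co(K)$, I obtain a subdivision $K'$ in which every closed star $\overline{\St_{K'}(v)}$ has diameter less than $\delta$, so for each vertex $v$ I may choose a color $i(v)\in\{1,\ldots,m+1\}$ with $\overline{\St_{K'}(v)}\subset U_{i(v)}$. Setting
$$W_i = \bigcup_{v:\, i(v)=i} \St_{K'}(v),$$
I obtain an open semialgebraic cover $\{W_i\}_{i=1}^{m+1}$ of $X$ with $W_i\subset U_i$. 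Each $W_i$ is classically categorical in $X$ because $U_i$ is; the inclusion $W_i\hookrightarrow X$ and a suitable constant map $W_i\to X$ are both semialgebraic and classically homotopic, so by Fact \ref{t:compa}.(c) applied with $A=\emptyset$ (which is trivially relatively closed in $W_i$) they are semialgebraically homotopic. Hence $\{W_i\}$ is a semialgebraic categorical open cover of size $m+1$, proving $\cat(X)^{\mathbb{R}_{sa}}\leq m$.

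The main technical point is the refinement step: the coloring must produce exactly $m+1$ open sets (not one per vertex), each lying inside the corresponding $U_i$, so that Fact \ref{t:compa}.(c) can transfer classical contractibility to semialgebraic contractibility without inflating the number of cover elements. Compactness is essential both for applying the Lebesgue number lemma and for ensuring the mesh of iterated barycentric subdivisions tends to zero; this is precisely why the passage through $|\co(K)|$ cannot be skipped.
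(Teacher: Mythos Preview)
Your proof is correct and follows essentially the same strategy as the paper: reduce to a compact polyhedron via Fact \ref{f:core}, use the Lebesgue number lemma together with enough subdivision to obtain a semialgebraic open refinement of the same cardinality, and then transfer classical null-homotopy to semialgebraic null-homotopy via Fact \ref{t:compa}.(c). The only difference is in the combinatorial bookkeeping---the paper first applies the shrinking lemma, assigns simplices to the $U_i$, takes closures $\overline{A_i}$, and then passes to their open star neighbourhoods in the barycentric subdivision, whereas your vertex-colouring by open stars is slightly more direct and avoids the shrinking-lemma detour.
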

\begin{proof}Clearly, $\cat(X)^{\mathbb{R}_{sa}}\geq \cat(X)$. We show that $\cat(X)^{\mathbb{R}_{sa}}\leq \cat(X)$. We can assume that $X=|K|$ for some simplicial complex $K$. Moreover, since strong deformation retracts are homotopy equivalences, by Fact \ref{f:core} and Fact \ref{f:catinv} we can assume that $K$ is closed. Let $\{U_i\}_{i=1}^{m+1}$ be a categorical open cover of $|K|$. We will construct a semialgebraic categorical open cover $\{V_i\}_{i=1}^{m+1}$ of $|K|$. Firstly, by the shrinking lemma we can assume that each $\overline{U_i}$ is contractible in $|K|$. Furthermore, by the Lebesgue's number lemma we can also assume that for each $\sigma\in K$ there is $i\in \{1,\ldots,m+1\}$ such that $\sigma\subset U_i$. We define $\mathcal{F}_i:=\{\sigma\in K:\sigma\subset U_i\}$ and $$A_i:=\bigcup_{\sigma\in \mathcal{F}_i}\sigma$$ for each $i=1,\ldots,m+1$. Note that (i) $|K|=A_1\cup \cdots \cup A_{m+1}$ and (ii) each $\overline{A_i}$ is contractible in $|K|$. On the other hand, each $\overline{A_i}$ is a semialgebraic strong deformation retract of the open semialgebraic set $V_i:=\St_{K'}(\overline{A_i})$, where $K'$ is the first barycentric subdivision of $K$ (see \cite[Prop III.1.6]{85DK}). Therefore, by (ii), $V_i$ is (not necessarily semialgebraically) contractible in $|K|$. Now, by Fact \ref{t:compa}.(c) if a semialgebraic subset $A$ of a semialgebraic set $B$ is contractible in $B$, then it is semialgebraically contractible in $B$. Hence, each $V_i$ is semialgebraically contractible in $|K|$ and hence, by (i), $\{V_i\}_{i=1}^{m+1}$ is a semialgebraic categorical open cover of $|K|$. We deduce that $\cat(X)^{\mathbb{R}_{sa}}\leq\cat(X)$, as required.
\end{proof}
\begin{cor}\label{cor:catinvar}The o-minimal LS-category is invariant under elementary extensions and o-minimal expansions.
\end{cor}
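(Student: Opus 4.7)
The plan is to reduce both invariance claims to the comparison Theorems \ref{t:catcomp1} and \ref{t:catcomp2} via triangulation. For any $\mathcal{R}$-definable set $X$, the triangulation theorem furnishes a definable homeomorphism $\phi\colon |K|\to X$ with $K$ a simplicial complex in some $R^p$; since definable homeomorphisms are definable homotopy equivalences, Fact \ref{f:catinv} gives $\cat(X)^{\mathcal{R}}=\cat(|K|)^{\mathcal{R}}$. Because the vertices of $K$ lie in $R^p$, the realization $|K|$ is a semialgebraic subset of $R^p$, and Theorem \ref{t:catcomp1} yields
$$
\cat(X)^{\mathcal{R}}=\cat(|K|)^{\mathcal{R}_{sa}}.
$$
This identity is the common starting point for the two cases.

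For invariance under o-minimal expansions, let $\mathcal{R}'$ be an o-minimal expansion of $\mathcal{R}$ and $X$ an $\mathcal{R}$-definable (hence $\mathcal{R}'$-definable) set. The homeomorphism $\phi$ remains definable in $\mathcal{R}'$, so the above reduction applies in both structures; since the underlying real closed field is the same, $\mathcal{R}_{sa}=\mathcal{R}'_{sa}$ and hence
$$
\cat(X)^{\mathcal{R}}=\cat(|K|)^{\mathcal{R}_{sa}}=\cat(|K|)^{\mathcal{R}'_{sa}}=\cat(X)^{\mathcal{R}'}.
$$

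For invariance under elementary extensions, suppose $\mathcal{R}\preceq\mathcal{R}'$ and write $S$ for the underlying real closed field of $\mathcal{R}'$, so $R\prec S$ and $S$ is a real closed field extension of $R$. Interpreting the formulas defining $K$ and $\phi$ in $\mathcal{R}'$ produces a definable triangulation $\phi(S)\colon |K(S)|\to X(\mathcal{R}')$ of the realization of $X$ in $\mathcal{R}'$; this is the only step requiring a small check, but it is routine because "$(K,\phi)$ is a definable triangulation of $X$" is a first-order statement in the parameters, preserved by the elementary embedding. Applying the reduction in both $\mathcal{R}$ and $\mathcal{R}'$ and Theorem \ref{t:catcomp2} in the middle, we conclude
$$
\cat(X)^{\mathcal{R}}=\cat(|K|)^{\mathcal{R}_{sa}}=\cat(|K(S)|)^{\mathcal{S}_{sa}}=\cat(X(\mathcal{R}'))^{\mathcal{R}'}.
$$
The main (and only) obstacle is the elementary persistence of the triangulation, which as noted is immediate.
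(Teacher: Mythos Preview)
Your proof is correct and follows the same approach as the paper, which simply cites Theorems~\ref{t:catcomp1} and~\ref{t:catcomp2}; you have spelled out the triangulation reduction that the paper leaves implicit.
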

\begin{proof}This follows from Theorem \ref{t:catcomp1} and Theorem \ref{t:catcomp2}. 
\end{proof}
\begin{proof}[Proof of Theorem \ref{cor:catdefreal}]We denote by $\overline{\mathbb{Q}}$ the real algebraic numbers. It follows from Corollary \ref{cor:catinvar} and Theorem \ref{t:catcomp3} that $\cat(X)^{\mathcal{R}}=\cat(X)^{\mathcal{R}_{sa}}=\cat(X(\overline{\mathbb{Q}}))^{\overline{\mathbb{Q}}_{sa}}=\cat(X(\mathbb{R}))^{\mathbb{R}_{sa}}=\cat(X(\mathbb{R}))$.\end{proof}
We can apply the comparison Theorem \ref{cor:catdefreal} to transfer classical results concerning the LS-category to the o-minimal setting.
\begin{cor}\label{c:catdim}Let $X$ be a definably connected definable set. Then $$\cat(X)^{\mathcal{R}}\leq dim(X).$$
\end{cor}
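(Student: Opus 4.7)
The plan is to transfer to a parameter-free polyhedron in $R^N$ so that Theorem \ref{cor:catdefreal} applies, and then to invoke the classical inequality $\cat(Y)\leq \dim Y$ for connected CW-complexes.

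First, by the triangulation theorem and homotopy invariance (Fact \ref{f:catinv}), I may assume $X=|K|$ for some simplicial complex $K$ in $R^p$, with $\dim|K|=\dim X$. Definable connectedness of $X$ forces $K$ to be combinatorially connected, in the sense that its $1$-skeleton is connected.

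Next, I re-realize the abstract simplicial complex underlying $K$ with vertices chosen in $\mathbb{Q}^N$ in sufficiently general position, obtaining a semialgebraic polyhedron $|K_0|\subset R^N$ defined without parameters, of the same dimension and the same combinatorial type as $K$. The canonical piecewise-affine identification between $|K|$ and $|K_0|$ is a definable (indeed semialgebraic over $R$) homeomorphism, so Fact \ref{f:catinv} yields $\cat(X)^{\mathcal{R}}=\cat(|K_0|)^{\mathcal{R}}$. Since $|K_0|$ is parameter-free, Theorem \ref{cor:catdefreal} then gives $\cat(|K_0|)^{\mathcal{R}}=\cat(|K_0|(\mathbb{R}))$, and $|K_0|(\mathbb{R})$ is a connected finite polyhedron of dimension $\dim X$. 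The classical inequality $\cat \leq \dim$ for connected CW-complexes (see \cite{03CLOT}) then gives $\cat(|K_0|(\mathbb{R}))\leq \dim X$, which finishes the proof.

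The main obstacle is precisely the parameter issue: an arbitrary definable $X$ may involve parameters from $R$ not lying in $\mathbb{Q}$, while Theorem \ref{cor:catdefreal} applies only to semialgebraic sets defined without parameters. Re-triangulating with rational vertices bypasses this cleanly, exploiting the fact that the definable homeomorphism type of $|K|$ depends only on the combinatorial data of $K$. Once reduced to $\mathbb{R}$, one is on completely classical ground.
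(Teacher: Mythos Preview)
Your argument is correct and follows essentially the same route as the paper: triangulate, replace the complex by a combinatorially isomorphic one with algebraic/rational vertices so that Theorem~\ref{cor:catdefreal} applies, and then quote the classical inequality over~$\mathbb{R}$.

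The one technical point on which the paper is more careful is the passage to a \emph{closed} complex via Fact~\ref{f:core} before transferring to~$\mathbb{R}$. Your $K$ (and hence $K_0$) may be an open simplicial complex, so $|K_0|(\mathbb{R})$ need not literally be a finite CW-complex, and your appeal to ``the classical inequality $\cat\le\dim$ for connected CW-complexes'' does not apply on the nose. This is easily repaired---either insert the core retraction first, or note that an open polyhedron is still a connected ANR of covering dimension $\dim X$ (which is the form of the classical bound the paper actually cites from \cite[Thm.~1.7]{03CLOT})---but it should be made explicit.
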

\begin{proof}By the Triangulation theorem, Fact \ref{f:core} and \ref{f:catinv}, we can assume that $X=|K|$ for a closed simplicial complex $K$ whose vertices lie in the real algebraic numbers $\overline{\mathbb{Q}}$. Now, it follows from Theorem \ref{cor:catdefreal} that $\cat(|K|(\mathbb{R}))^{\mathcal{R}}=\cat(|K|(\mathbb{R}))$. By the classical version of Corollary \ref{c:catdim} (see \cite[Thm. 1.7]{03CLOT}), $$\cat(|K|(\mathbb{R}))\leq \textrm{dim}(|K|(\mathbb{R}))^{\text{top}},$$ where $\dim(|K|(\mathbb{R}))^{\text{top}}$ denotes the covering dimension of $|K|(\mathbb{R})$. On the other hand, since $K$ is a simplicial complex, $\dim(|K|(\mathbb{R}))^{\text{top}}$ is exactly the dimension of $K$ as a simplicial complex, i.e., $\dim(|K|(\mathbb{R}))^{\text{top}}=\textrm{dim}(|K|)$, the latter being also the o-minimal dimension, as required.
\end{proof}
\begin{cor}\label{c:cor3}Let $X$ be definable set and let $n\geq 1$ such that $\pi_r(X)^{\mathcal{R}}=0$ for all $r=0,\ldots,n-1$. Then $cat(X)^{\mathcal{R}}\leq dim(X)/n$.
\end{cor}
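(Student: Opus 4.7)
The plan is to mimic the approach of Corollary \ref{c:catdim}: reduce the statement to its classical counterpart via the transfer machinery developed in Section \ref{s:LScat}, and then invoke the well-known classical CLS inequality for $(n-1)$-connected spaces.

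First, by the triangulation theorem together with Facts \ref{f:core} and \ref{f:catinv}, I will replace $X$ by $|K|$ for a closed simplicial complex $K$ whose vertices lie in $\overline{\mathbb{Q}}$. Both the vanishing of the o-minimal homotopy groups and the o-minimal LS-category are preserved under definable homotopy equivalences, so this reduction is legitimate, and after it $X$ is semialgebraic and defined without parameters.

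Second, I will transfer the connectivity hypothesis to the classical side. Since $\partial I^r$ is relatively closed in $I^r$, the chain of bijections coming from Fact \ref{t:compa}(a), (b), (c) applied to $X(\mathbb{R})$, $X(\overline{\mathbb{Q}})$ and $X$---which are group isomorphisms on pointed homotopy classes representing homotopy groups, as noted in the remark following that fact---yields $\pi_r(X(\mathbb{R}))\cong \pi_r(X)^{\mathcal{R}}=0$ for $r=0,\ldots,n-1$. Hence $X(\mathbb{R})$ is $(n-1)$-connected in the classical sense.

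Third, the classical version of the inequality (see \cite[Thm.1.7]{03CLOT}) applied to $X(\mathbb{R})$ gives $\cat(X(\mathbb{R}))\leq \dim(X(\mathbb{R}))/n$. Combining this with Theorem \ref{cor:catdefreal}, which yields $\cat(X)^{\mathcal{R}}=\cat(X(\mathbb{R}))$, and with the dimensional identification $\dim(X(\mathbb{R}))^{\text{top}}=\dim K=\dim(X)$ (exactly as in the proof of Corollary \ref{c:catdim}), one concludes $\cat(X)^{\mathcal{R}}\leq \dim(X)/n$, as required.

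The main obstacle I anticipate is the homotopy-group transfer in the second step: one has to check that after the reduction to $|K|$ over $\overline{\mathbb{Q}}$ the data genuinely falls within the scope of Fact \ref{t:compa}, and that the composed bijections are group isomorphisms rather than mere set bijections. Both points are provided by the remark following Fact \ref{t:compa}, so the obstacle is essentially bookkeeping; the rest is then an immediate application of the classical CLS bound combined with Theorem \ref{cor:catdefreal}.
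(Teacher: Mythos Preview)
Your proposal is correct and follows essentially the same transfer strategy as the paper, which simply says ``similar to the proof of Corollary~\ref{c:catdim} using the corresponding classical statement''; you have merely spelled out the extra step of transferring the $(n-1)$-connectivity hypothesis via Fact~\ref{t:compa}, which the paper leaves implicit. One small slip: the classical inequality for $(n-1)$-connected spaces is \cite[Thm.~1.50]{03CLOT}, not \cite[Thm.~1.7]{03CLOT} (the latter is the $n=1$ case used in Corollary~\ref{c:catdim}).
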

\begin{proof}Similar to the proof of Corollary \ref{c:catdim} using the corresponding classical statement to that of Corollary \ref{c:cor3} (see \cite[Thm. 1.50]{03CLOT}).
\end{proof}
\begin{cor}\label{c:cor2}Let $X$ be a definable set. Let cuplength$_{\mathbb{Q}}(X)^{\mathcal{R}}$ be the least integer $k$ such that all $(k+1)$-fold cup products vanish in the reduced cohomology $\tilde{H}^{*}(X;\mathbb{Q})^{\mathcal{R}}$. Then $\catc(X)^{\mathcal{R}}\geq \textrm{cuplength}_{\mathbb{Q}}(X)^{\mathcal{R}}$.
\end{cor}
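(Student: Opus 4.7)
The plan is to follow the same transfer pattern already used in the proofs of Corollary \ref{c:catdim} and Corollary \ref{c:cor3}. By the Triangulation theorem together with Fact \ref{f:core} and Fact \ref{f:catinv}, I can assume that $X=|K|$ for a closed simplicial complex $K$ whose vertices lie in $\overline{\mathbb{Q}}$. Then Theorem \ref{cor:catdefreal} gives
\[
\cat(X)^{\mathcal{R}}=\cat(|K|(\mathbb{R})).
\]
The classical Lusternik--Schnirelmann inequality $\cat(Y)\geq \textrm{cuplength}_{\mathbb{Q}}(Y)$ (see \cite[Prop.1.5]{03CLOT}) applied to $Y=|K|(\mathbb{R})$ then yields
\[
\cat(X)^{\mathcal{R}}\geq \textrm{cuplength}_{\mathbb{Q}}(|K|(\mathbb{R})).
\]

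What remains is a comparison on the cohomology side, namely
\[
\textrm{cuplength}_{\mathbb{Q}}(X)^{\mathcal{R}}=\textrm{cuplength}_{\mathbb{Q}}(|K|(\mathbb{R})).
\]
I would obtain this from the corresponding invariance results for o-minimal cohomology. Indeed, for a closed simplicial complex $K$ with vertices in $\overline{\mathbb{Q}}$, the reduced o-minimal cohomology ring $\tilde{H}^{*}(|K|;\mathbb{Q})^{\mathcal{R}}$ coincides with the reduced o-minimal cohomology ring over $\mathcal{R}_{sa}$ (invariance under o-minimal expansions), which in turn coincides with that computed over $\overline{\mathbb{Q}}_{sa}$ (invariance under real closed field extensions), and finally with the classical singular cohomology ring $\tilde{H}^{*}(|K|(\mathbb{R});\mathbb{Q})$ over $\mathbb{R}_{sa}$. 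The cup product structure is preserved at each step because it is computed simplicially on $K$ (whose combinatorics are unchanged under base change), so the least integer $k$ beyond which all $(k+1)$-fold cup products vanish is the same in all settings.

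The step that requires most care is precisely this cohomology comparison: Fact \ref{t:compa} only records the transfer for homotopy sets, so strictly speaking one needs the analogous ring-theoretic transfer for singular cohomology. This is the main obstacle, but it is standard in the development of o-minimal (co)homology: invariance under elementary extensions and real closed base change for o-minimal singular cohomology with its natural ring structure is part of the machinery used, for instance, in \cite{07Be}. Granting that comparison, combining the two displayed equalities gives
\[
\cat(X)^{\mathcal{R}}\geq \textrm{cuplength}_{\mathbb{Q}}(|K|(\mathbb{R}))=\textrm{cuplength}_{\mathbb{Q}}(X)^{\mathcal{R}},
\]
as required.
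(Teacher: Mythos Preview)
Your proposal is correct and follows essentially the same transfer pattern as the paper's own proof, which is a one-line argument citing Theorem \ref{cor:catdefreal}, the classical cup-length inequality \cite[Thm.~1.5]{03CLOT}, and the o-minimal cohomology theory; you have simply spelled out the reduction steps in more detail. The only cosmetic difference is that the paper invokes the o-minimal cohomological theory of \cite{04EO} rather than \cite{07Be} for the ring-level comparison, and you should also note that the reduction via Fact \ref{f:core} preserves cuplength because it is a definable homotopy equivalence and o-minimal cohomology is homotopy invariant.
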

\begin{proof}As before, this follows from Theorem \ref{cor:catdefreal}, the o-minimal cohomological theory developed in \cite{04EO} and the corresponding classical statement to that of Corollary \ref{c:cor2} (see \cite[Thm. 1.5]{03CLOT}). 
\end{proof}
% \begin{obs}There exist a generalization of the LS-category. The o-minimal geometric category of a definable set $X$, denoted by $\text{gcat}(X)^{\mathcal{R}}$, is the minimal $m\in \mathbb{N}$ such that there is a covering of $X$ with $m+1$ open contractible (in themselves) definable subsets. Clearly, for all definable set $X$ we have that $\cat(X)^{\mathcal{R}}\leq \text{gcat}(X)^{\mathcal{R}}$. The o-minimal geometric category is not invariant under definable equivalences (see \cite[Prop.3.11]{03CLOT}). Anyway, since it is clearly invariant under definable homeomorphisms, we point out that it seems easy to adapt theorems \ref{t:catcomp1}, \ref{t:catcomp2} and \ref{t:catcomp3} to the o-minimal geometric category. 
% \end{obs}
\section{Homotopy types of definable groups}\label{s:LScatgr}
In this section we assume that $\mathcal{R}$ is sufficiently saturated. In accordance with the notation used in the introduction, we shall denote by $\mathbb{L}$ the functor

\vspace{-0.2cm}
\begin{small}
$$\begin{array}{rcl}\mathbb{L}:\{\text{Definably compact definable groups}\}& \rightarrow & \{\text{Compact Lie groups}\}\\
G & \mapsto & G/G^{00}.
\end{array}$$\end{small}
\hspace{-0.1cm}which maps definable homomorphisms to continuous homomorphisms (see \cite[Thm.5.2]{07Be}). We collect here some properties of the functor $\mathbb{L}$ studied in \cite{07Be} which will be used in the sequel without mention.
\begin{fact}\label{connec}Let $G$ and $H$ be definably compact definable groups and let $\pi:G\rightarrow \mathbb{L}(G)$ be the projection.\\
\emph{(i)} The functor $\mathbb{L}$ is exact.\\
\emph{(ii)} If $H$ is a definable subgroup of $G$ then $\pi(H)=\mathbb{L}(H)$.\\
\emph{(iii)} $\mathbb{L}(G^0)=\mathbb{L}(G)^0$.\\
\emph{(iv)} $\mathbb{L}(G\times H)=\mathbb{L}(G)\times \mathbb{L}(H)$.
\end{fact}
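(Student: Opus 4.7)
The plan is to reduce all four properties to understanding how $G^{00}$ interacts with definable subgroups, quotients, connected components and products. The technical core is property (ii), which I would tackle first; once it is in hand, (i), (iii) and (iv) follow by essentially formal arguments.

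For (ii), I would fix a definable subgroup $H \leq G$ and aim to prove $H \cap G^{00} = H^{00}$. The inclusion $H^{00} \subseteq H \cap G^{00}$ is immediate, since $H \cap G^{00}$ is type-definable in $H$ with index bounded by $[G:G^{00}]$, so it is a bounded-index type-definable subgroup of $H$. For the reverse inclusion, I would restrict $\pi : G \to \mathbb{L}(G)$ to $H$ and observe that $\pi(H) = H/(H \cap G^{00})$, as the continuous image of a type-definable set in the compact Hausdorff group $\mathbb{L}(G)$, is a closed subgroup and hence a compact Lie subgroup of $\mathbb{L}(G)$. Pillay's conjecture applied to $H$ gives a compact connected Lie group $\mathbb{L}(H) = H/H^{00}$ of dimension $\dim(H)$, and the inclusion $H^{00} \subseteq H \cap G^{00}$ induces a continuous surjective Lie group homomorphism $\mathbb{L}(H) \to \pi(H)$; matching dimensions via Pillay's conjecture applied on both sides and invoking the connectedness of $\mathbb{L}(H)$ forces this map to be an isomorphism, whence $H \cap G^{00} = H^{00}$.

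Property (i) follows: for a short exact sequence $1 \to N \to G \to K \to 1$ of definable groups, (ii) identifies $\mathbb{L}(N) = \pi(N) = NG^{00}/G^{00}$ with the kernel of the induced map $\mathbb{L}(G) \to \mathbb{L}(K)$, while surjectivity of the latter follows from surjectivity of $G \to K$ together with the observation that the image of $G^{00}$ in $K$ is a type-definable subgroup of bounded index, which, by the same reasoning as in (ii), must coincide with $K^{00}$. For (iii), definable connectedness of $G^0$ together with $[G:G^0] < \infty$ yields $G^{00} \subseteq G^0$, so $\mathbb{L}(G^0) = G^0/G^{00}$ is a closed subgroup of $\mathbb{L}(G)$ of finite index, and it is connected because any nontrivial clopen decomposition of $\mathbb{L}(G^0)$ would pull back through $\pi$ to a clopen decomposition of $G^0$, contradicting definable connectedness; hence $\mathbb{L}(G^0) = \mathbb{L}(G)^0$. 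For (iv), the inclusion $(G \times H)^{00} \subseteq G^{00} \times H^{00}$ is clear since the right-hand side is type-definable of bounded index in $G \times H$; conversely, applying (ii) to the definable subgroups $G \times \{e\}$ and $\{e\} \times H$ of $G \times H$ puts $G^{00} \times \{e\}$ and $\{e\} \times H^{00}$ inside $(G \times H)^{00}$, and multiplying these inclusions yields the reverse containment.

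The main obstacle I anticipate is the reverse inclusion in (ii): showing that $\pi(H)$ is a compact Lie group is only the easy half, while forcing the canonical surjection $\mathbb{L}(H) \to \pi(H)$ to have trivial kernel requires a careful dimension argument using Pillay's conjecture applied both at $G$ and at $H$, combined with the Hausdorffness of the logic topology on $\mathbb{L}(G)$. This is the step where the full strength of the theory of $G^{00}$ developed by Berarducci, Hrushovski, Peterzil, Pillay and Otero is genuinely used, and all remaining parts of the fact rest on it.
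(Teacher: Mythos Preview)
The paper does not argue this fact from scratch: it records (i) and (ii) as Theorems~5.2 and~4.4 of Berarducci \cite{07Be}, cites his Corollary~4.7 for (iv), and notes that (iii) follows from (i) together with the fact, already quoted in the introduction, that $\mathbb{L}$ sends definably connected groups to connected Lie groups. Your plan is more ambitious in that you try to supply the arguments themselves. Your reductions of (i), (iii) and (iv) to (ii) are sound and are essentially how these implications are organised in \cite{07Be}; in particular your observation that the image of $G^{00}$ in a definable quotient is again the $00$-component is correct and is the right ingredient for exactness.

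The genuine gap is in your argument for (ii). You propose to show that the canonical surjection $\mathbb{L}(H)\to\pi(H)$ is an isomorphism by ``matching dimensions via Pillay's conjecture applied on both sides and invoking the connectedness of $\mathbb{L}(H)$''. Two problems arise. First, $\mathbb{L}(H)$ is connected only when $H$ is definably connected, which is not assumed. Second, and more seriously, even for connected compact Lie groups of the same dimension a surjective homomorphism need not be injective: the double cover $SU(2)\to SO(3)$ is the standard obstruction. At best your dimension count shows that the kernel $(H\cap G^{00})/H^{00}$ is a finite subgroup of $\mathbb{L}(H)$, not that it is trivial. Closing this gap requires structural information about $G^{00}$ beyond its mere existence and dimension---in Berarducci's treatment this enters through divisibility and torsion-freeness properties of $G^{00}$, which are what ultimately kill the finite quotient. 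That is exactly the ``full strength of the theory of $G^{00}$'' you mention in your last paragraph, but your sketch never actually deploys it at the point where it is needed.
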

\begin{proof}\emph{(i)} and \emph{(ii)} can be found in \cite[Thm.5.2]{07Be} and \cite[Thm.4.4]{07Be} respectively. \emph{(iii)} follows from \emph{(i)} and \emph{(iii)} follows from \cite[Cor.4.7]{07Be}.
\end{proof}
As we pointed out in the introduction, the aim of this section is to prove Theorem \ref{teo1}. To do this we first prove the following. Recall that given a definable group $G$ the commutator subgroup $G':=[G, G]$ might not be definable. However, if $G$ is definably compact definably connected then $G'$ is a definably connected definable subgroup of $G$ (see below and \cite[Cor.6.4]{HPP08}).
\begin{teo}\label{teo2}Let $G$ be a definably connected definably compact definable group. Then $Z(G)^0\times G'$ is definable homotopy equivalent to $G$.
\end{teo}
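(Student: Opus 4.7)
The natural candidate for the required homotopy equivalence is the multiplication map
\[
m\colon Z(G)^0\times G'\longrightarrow G,\qquad (z,g)\mapsto zg,
\]
which is a definable group homomorphism since every element of $Z(G)^0$ commutes with everything in $G$. My plan is to show that $m$ is a finite-to-one surjection inducing isomorphisms on $\pi_n$ for $n\geq 2$, then to handle the $\pi_1$-discrepancy by a separate construction, and finally to conclude via an o-minimal Whitehead-type theorem that follows from the comparison results of Fact~\ref{t:compa} together with the classical Whitehead theorem.

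First I would verify surjectivity of $m$ and finiteness of its kernel $F=\{(z,z^{-1}):z\in Z(G)^0\cap G'\}$. Both $G=Z(G)^0\cdot G'$ and the finiteness of $Z(G)^0\cap G'$ are standard facts for compact connected Lie groups (the latter because the center of a compact semisimple group is finite). They transfer to $G$ via the functor $\mathbb{L}$: using Fact~\ref{connec}(ii) and its consequences one identifies $\mathbb{L}(Z(G)^0)=Z(\mathbb{L}(G))^0$ and $\mathbb{L}(G')=\mathbb{L}(G)'$, and then exactness of $\mathbb{L}$ allows one to read off surjectivity and finiteness of $\ker m$ from those of $\mathbb{L}(m)$. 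Once $m$ is known to be a finite-to-one definable surjection, applying Fact~\ref{BMO} naturally to both $G$ and $Z(G)^0\times G'$, combined with the observation that $\mathbb{L}(m)$ is a finite covering of compact Lie groups, gives that $m_*\colon \pi_n(Z(G)^0\times G')\to\pi_n(G)$ is an isomorphism for all $n\geq 2$.

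The main obstacle is $\pi_1$: the induced map $\mathbb{L}(m)_*$ is only injective, with finite cokernel $F$, so $m$ itself is not in general a $\pi_1$-isomorphism. The classical way around this is that the principal $\mathbb{L}(G)'$-bundle $\mathbb{L}(G)\to \mathbb{L}(G)/\mathbb{L}(G)'$ over the torus $\mathbb{L}(G)/\mathbb{L}(G)'$ is always topologically trivial (its classifying obstructions vanish for bundles over tori when the structure group is connected), yielding a continuous section and hence a homeomorphism $\mathbb{L}(G)\cong \mathbb{L}(Z(G)^0)\times \mathbb{L}(G)'$ that is not the quotient map of $\mathbb{L}(m)$. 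My plan is to produce a definable version of this trivialization: after triangulating appropriately, one reduces to a semialgebraic situation where such a section can be exhibited explicitly, and then one transfers it to $\mathcal{R}$ using Fact~\ref{t:compa}. Combined with the observation that $G/G'\cong Z(G)^0/F$ is a definably connected, definably compact definable abelian group of the same o-minimal dimension as $Z(G)^0$ and hence definably isomorphic to $Z(G)^0$, one obtains the desired definable homotopy equivalence $Z(G)^0\times G'\simeq G$. The technically subtlest step is producing this definable section (equivalently, the definable homeomorphism $G/G'\times G'\to G$) in full generality, particularly for $G$ not a priori defined over $\overline{\mathbb{Q}}$.
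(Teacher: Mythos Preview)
Your proposal contains a genuine gap in the handling of $\pi_1$, and the paper explicitly addresses why both of your suggested routes do not work.

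First, the paper observes just before its proof of this theorem that the multiplication map $m=p\colon Z(G)^0\times G'\to G$ is a definable homotopy equivalence \emph{if and only if} it is an isomorphism: since $p$ is a definable covering, the index of $p_*(\pi_1(Z(G)^0\times G'))$ in $\pi_1(G)$ equals $|\ker p|$, so $p_*$ is a $\pi_1$-isomorphism only when $\ker p$ is trivial. Thus $m$ itself can never be the desired homotopy equivalence when $Z(G)^0\cap G'$ is nontrivial, and your plan to ``fix'' $m$ must really produce a \emph{different} map.

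Second, your proposed fix --- a definable trivialization of $G\to G/G'$ --- is precisely the Borel argument of Fact~\ref{borel}, and the paper's Remark immediately after that fact explains why it does not transfer: even granting an o-minimal bundle classification, the inductive step requires a proper one-dimensional definable subgroup of the abelian group $Z(G)^0$, and there exist definably connected definably compact abelian groups with no proper infinite definable subgroups. More concretely, your appeal to Fact~\ref{t:compa} cannot produce a section: that fact only yields a definable map \emph{homotopic} to a given continuous one, not one satisfying the strict equation $\pi\circ s=\mathrm{id}$. And your claim that $G/G'$ is definably \emph{isomorphic} to $Z(G)^0$ because they have the same dimension is false in general; they are only definably homotopy equivalent. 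Producing a definable section would in fact give a definable \emph{homeomorphism} $G\cong Z(G)^0\times G'$, which the paper states as an open conjecture.

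The paper's actual argument sidesteps all of this. It first replaces $Z(G)^0$ by the standard torus $\mathbb{T}_R^d$ (they are definably homotopy equivalent by \cite[Cor.~4.4]{08pBMO}), and then builds a map
\[
f\colon \mathbb{T}_R^d\times G'\longrightarrow G,\qquad (t_1,\dots,t_d,g)\longmapsto \gamma_1(t_1)\cdots\gamma_d(t_d)\,g,
\]
where the $\gamma_i$ are definable loops in $G$ chosen so that their classes generate the free part $\pi_1(G)/\mathrm{Tor}(\pi_1(G))\cong\mathbb{Z}^d$. This $f$ is \emph{not} a homomorphism and its image need not contain $Z(G)^0$; the curves $\gamma_i$ live in $G$, not in $Z(G)^0$. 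Precisely because of this choice, $f_*$ is surjective on $\pi_1$ by construction, and injectivity plus the higher $\pi_n$ follow from the covering argument you already outlined (the paper's Claim). Then the o-minimal Whitehead theorem \cite[Thm.~5.6]{07BO} finishes the proof. The key idea you are missing is to abandon the group-theoretic map $m$ and instead engineer a purely homotopy-theoretic map tailored to hit the generators of $\pi_1(G)$.
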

The latter is motivated by the following classical result  concerning compact Lie groups proved by A. Borel. We include the proof for completeness.
\begin{fact}\emph{\cite[Prop.3.1]{B}}\label{borel} Let $G$ be a compact, connected Lie group. Then $G$  is homeomorphic to the topological direct product of its commutator subgroup $G'$ and the connected component  $Z(G)^0$ of its center.
\end{fact}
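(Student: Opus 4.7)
The plan is to realize $G$ as a quotient of $Z(G)^0 \times G'$ by a finite central subgroup, and then ``untwist'' this quotient with a continuous homomorphism $\phi \colon Z(G)^0 \to G'$ that absorbs the twist. First I would invoke the standard decomposition of the universal cover of a compact connected Lie group, $\tilde{G} \cong \mathbb{R}^n \times K$, where $K$ is compact, simply connected, and semisimple (so $Z(K)$ is finite by Weyl's theorem). Setting $\Gamma := \pi_1(G) \subset Z(\tilde{G}) = \mathbb{R}^n \times Z(K)$, $\Gamma_1 := \Gamma \cap (\mathbb{R}^n \times \{e\})$ and $\Gamma_2 := \Gamma \cap (\{0\} \times Z(K))$, the compactness of $Z(G)^0$ forces $\Gamma_1$ to be a full-rank lattice, and the inclusion $\Gamma/\Gamma_1 \hookrightarrow Z(K)$ forces $\Gamma_1 \times \Gamma_2$ to have finite index in $\Gamma$. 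Under the natural identifications $Z(G)^0 = \mathbb{R}^n/\Gamma_1$ and $G' = K/\Gamma_2$ (the latter because $[\tilde{G},\tilde{G}] = \{0\} \times K$), one gets $G = (Z(G)^0 \times G')/F$, where $F := \Gamma/(\Gamma_1 \times \Gamma_2)$ is finite.

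A short computation shows that the two coordinate projections $p_T \colon F \to Z(G)^0$ and $p_K \colon F \to Z(G')$ are both injective: if a lift $(l,z) \in \Gamma$ of $f$ has $l \in \Gamma_1$, then $(0,z) \in \Gamma$ implies $z \in \Gamma_2$, so $f = 0$. The $F$-action on $Z(G)^0 \times G'$ is therefore diagonal, by translation on the torus factor via $p_T$ and by left multiplication by central elements on the semisimple factor via $p_K$.

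The heart of the proof is the construction of a continuous homomorphism $\phi \colon Z(G)^0 \to G'$ satisfying $\phi(p_T(f)) = p_K(f)$ for every $f \in F$. Choosing a maximal torus $T' \subset G'$ containing $p_K(F)$ and letting $\tilde{F} \subset \mathbb{R}^n$ denote the preimage of $p_T(F)$ (a lattice containing $\Gamma_1$ with $\tilde{F}/\Gamma_1 = p_T(F)$), I would lift the composition $\tilde{F} \twoheadrightarrow p_T(F) \to Z(G') \hookrightarrow T'$ through $\exp \colon \mathfrak{t}' \to T'$ by freely picking preimages of a $\mathbb{Z}$-basis of $\tilde{F}$. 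This produces a $\mathbb{Z}$-linear map $\ell \colon \tilde{F} \to \mathfrak{t}'$ sending $\Gamma_1$ into $\ker \exp$ (since $\Gamma_1$ maps trivially to $F$). Extending $\ell$ $\mathbb{R}$-linearly to $\mathbb{R}^n \to \mathfrak{t}'$ and composing with $\exp$ yields a continuous homomorphism that factors through $Z(G)^0$ to give the desired $\phi$. I expect this construction to be the main obstacle: it critically uses both the freeness of the lattice $\tilde{F}$ and the surjectivity of $\exp$ on a maximal torus of the compact semisimple group $G'$.

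With $\phi$ in hand, the self-map $\Psi(t,g) := (t,\, g\phi(t)^{-1})$ of $Z(G)^0 \times G'$ satisfies
\[
\Psi\bigl(t \cdot p_T(f),\, g \cdot p_K(f)\bigr) \;=\; \bigl(t \cdot p_T(f),\, g\, \phi(t)^{-1}\bigr),
\]
so $\Psi$ intertwines the diagonal $F$-action with the action that only translates the first factor. Passing to quotients, $\Psi$ descends to a homeomorphism $G \cong (Z(G)^0/p_T(F)) \times G'$; since $Z(G)^0/p_T(F)$ is a torus of the same dimension as $Z(G)^0$, it is Lie-isomorphic to it, completing the proof.
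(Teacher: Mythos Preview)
Your argument is correct and takes a genuinely different route from the paper's proof. The paper (following Borel) proceeds by induction on $\dim Z(G)^0$: choosing a codimension-one subtorus $Z_1\subset Z(G)^0$, the projection $G\to G/Z_1G'$ is a principal $Z_1G'$-bundle over a circle, and such bundles with connected structure group are classified by $\pi_0(Z_1G')=0$, hence trivial; iterating splits off one circle factor at a time. Your approach is instead a one-shot algebraic construction: you identify $G$ with $(Z(G)^0\times G')/F$ via the universal-cover decomposition $\tilde G\cong\mathbb{R}^n\times K$, and then build a homomorphism $\phi\colon Z(G)^0\to G'$ (by lifting a lattice map through $\exp\colon\mathfrak{t}'\to T'$) that conjugates the diagonal $F$-action to an action on the torus factor alone. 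The bundle-theoretic proof is shorter and uses only the vanishing of $\pi_0$, making it more portable to other categories with a homotopy theory; your proof is more explicit---it actually exhibits the homeomorphism---and relies on structure theory specific to compact Lie groups (Weyl's finiteness of $Z(K)$, surjectivity of $\exp$ on tori, freeness of lattices in $\mathbb{R}^n$). Either way the conclusion is the same, and both arguments ultimately hinge on the fact that any finite subgroup of a torus is a direct factor up to automorphism.
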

\begin{proof}Firstly, note that $G=Z(G)^0G'$ and $Z(G)^0\cap G'$ is finite. Let $n$ be the dimension of $Z(G)^0$. Consider a connected $(n-1)$-dimensional Lie subgroup $Z_1$ of the torus $Z(G)^0$. Then, the projection $G \rightarrow G/Z_1G'$ is a principal bundle with a circle as its base and with the connected Lie group $Z_1G'$ as its fiber. On the other hand, the equivalence classes of principal bundles over an sphere $S^n$ with an arcwise connected group $Z_1G'$ as its fiber is in 1-1 correspondence with $\pi_{n-1}(Z_1G')$ (see \cite[Cor.18.6]{51S}). Hence, since  $\pi_0(Z_1G')=0$, the bundle $G \rightarrow G/Z_1G'$ is equivalent to the trivial one, so that $G$ is homeomorphic to $G/Z_1G'\times Z_1G'$. By recurrence we get that $G$ is homeomorphic to $Z(G)^0\times G'$.
%We will prove actually that if $S$ is a central torus of $G$ and $H$ is a normal connected closed subgroup of $G$ such that $G=SH$ then $G$ is homeomorphic to $S\times H$.  Let $n$ be the dimension of $S$. The case $S\subset H$ is obvious. Otherwise, consider a $(n-1)$-dimensional subgroup $S_1$ of $S$. Then the projection $G \rightarrow G/S_1H$ is a principal bundle with a circle as its base and with the connected Lie group $S_1H$ as its fiber. On the other hand, the equivalence classes of principal bundles over an sphere $S^n$ with an arcwise connected group $S_1H$ as its fiber is in 1-1 correspondence with $\pi_{n-1}(S_1H)$ (see \cite[Cor.18.6]{51S}). Hence, since  $\pi_0(S_1H)=0$ the bundle $G \rightarrow G/S_1H$ is equivalent to the trivial one, so that $G$ is homeomorphic to $G/S_1H\times S_1H$. By recurrence we get that $G$ is homeomorphic to $S\times H$.
\end{proof}
\begin{obs}The proof of Fact \ref{borel} does not apply in the o-minimal setting. For, even though we could prove an o-minimal version of the bundle classification used above, there are definably connected definably compact abelian definable groups without any proper infinite definable subgroup (see \cite[\S 5]{99PS}). Anyway, we conjecture that every definably connected definably compact definable group $G$ is definably homeomorphic to $Z(G)^0\times G'$.
\end{obs}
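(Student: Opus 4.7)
The plan is to verify the Remark by pinpointing precisely where Borel's proof fails in the o-minimal category and then to comment on the status of the conjectured homeomorphism. The argument in Fact~\ref{borel} is essentially an induction on the dimension $n$ of $Z(G)^{0}$: at each step one picks a connected codimension-one closed subgroup $Z_1\subset Z(G)^{0}$, considers the principal bundle $G\to G/Z_1G'$ whose base is a circle, and uses the bundle classification over $S^{1}$ via $\pi_{0}$ of the fibre to obtain a splitting. Everything apart from the initial choice of $Z_1$ is purely bundle-theoretic, and this part is, in principle, transferable: for a semialgebraic $G$, Fact~\ref{t:compa} together with Fact~\ref{BMO} would let one import the relevant classification from the classical setting, so the breakdown is not located at the bundle step.

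The actual obstruction I would identify is the opening move of the induction. Here one invokes the Peterzil--Starchenko examples \cite[\S 5]{99PS}: in suitable o-minimal expansions there exist definably connected, definably compact, abelian definable groups $A$ admitting no proper infinite definable subgroup. Taking $G:=A$, which is abelian, gives $G'=\{e\}$ and $Z(G)^{0}=A$, so no connected codimension-one \emph{definable} subgroup $Z_1\subset Z(G)^{0}$ exists. Hence Borel's induction cannot even be started within the category of definable groups, which is precisely the content of the first sentence of the Remark.

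To motivate the conjecture, I would note that by Fact~\ref{connec} the functor $\mathbb{L}$ is exact and respects products, and sends $Z(G)^{0}$ and $G'$ to $Z(\mathbb{L}(G))^{0}$ and $\mathbb{L}(G)'$ respectively. Combined with Fact~\ref{borel} this forces $\mathbb{L}(Z(G)^{0}\times G')\cong \mathbb{L}(G)$, while Theorem~\ref{teo2} shows that the two groups are already definably homotopy equivalent. The hard part, and the reason the statement remains a conjecture, is the absence of any general mechanism for upgrading a definable homotopy equivalence between definable groups to a definable homeomorphism; supplying such a mechanism, or constructing the homeomorphism directly in the presence of pathological central factors of the Peterzil--Starchenko type, lies beyond the methods of the present paper.
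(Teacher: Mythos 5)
Your elaboration matches the paper's own justification for this remark: the obstruction is exactly that Borel's induction cannot begin, because the required proper infinite (codimension-one) connected definable subgroup of the torus $Z(G)^0$ need not exist by the examples in \cite[\S 5]{99PS} (due to Peterzil and Steinhorn, not Peterzil--Starchenko), whereas the bundle-classification step is not where the argument breaks down. Your added motivation for the conjecture via Theorem \ref{teo2} is consistent with the paper as well, with the minor correction that the identifications $\mathbb{L}(Z(G)^0)=Z(\mathbb{L}(G))^0$ and $\mathbb{L}(G')=\mathbb{L}(G)'$ come from Lemma \ref{l:cender} rather than Fact \ref{connec}.
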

On the other hand, to prove Theorem \ref{teo2} we will use the following structural result about definably compact groups recently proved by E. Hrushovski, A. Pillay and Y. Peterzil. Recall that a definable group $G$ is semisimple if and only if it has no infinite abelian normal (definable) subgroup.
\begin{fact}\emph{\cite[Cor.6.4]{HPP08}}\label{HPP} Let $G$ be a definably connected definably compact group. Then $G':=[G,G]$ is definable, definably connected and semisimple. Moreover, the map $p:Z(G)^0\times G'\rightarrow G:(g,h)\mapsto gh$ is a homomorphism with finite kernel.
\end{fact}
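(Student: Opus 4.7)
The proof naturally splits into four claims: definability of $G' := [G,G]$, its definable connectedness, its semisimplicity, and the statement that $p$ is a homomorphism with finite kernel.

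The first two claims I would establish via the o-minimal analogue of Zilber's indecomposability theorem for definable groups. The set $X := \{[g,h] : g, h \in G\}$ is the continuous image under the commutator map of the definably connected set $G \times G$ and contains the identity. Since a definably connected definable set through the identity is indecomposable in the required sense, the subgroup generated by $X$---which is precisely $G'$---is definable and equals the finite product $X^n$ for some $n$. Being the continuous image of the definably connected set $G^{2n}$, the group $G'$ is itself definably connected.

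For semisimplicity I would transfer through the functor $\mathbb{L}$. Since $\mathbb{L}$ is exact (Fact \ref{connec}(i)) and sends commutators to commutators, one directly verifies $\mathbb{L}(G') = \mathbb{L}(G)'$, which is semisimple by classical Lie theory (the commutator subgroup of a compact connected Lie group has finite center). If $N$ were an infinite abelian definable normal subgroup of $G'$, then $\mathbb{L}(N)$ would be an abelian normal subgroup of the semisimple Lie group $\mathbb{L}(G')$ and hence finite, so $N$ would sit essentially inside the kernel $G' \cap G^{00}$. One would then need to leverage the structural properties of $G^{00}$ coming from Pillay's conjecture---in particular torsion-freeness and divisibility---to conclude that no infinite definable subgroup can be so concentrated, yielding a contradiction. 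This descent from $\mathbb{L}(G)$ back to $G$, which would rely heavily on the fine structure theory of definably compact groups and their Lie algebras, is the step I expect to be the main obstacle.

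Finally, $p$ is a homomorphism because $Z(G)^0$ is central: one has $p(g_1,h_1)\,p(g_2,h_2) = g_1 h_1 g_2 h_2 = g_1 g_2 h_1 h_2 = p(g_1 g_2, h_1 h_2)$. The kernel of $p$ is $\{(z, z^{-1}) : z \in Z(G)^0 \cap G'\}$, which is isomorphic to the central---hence normal and abelian---definable subgroup $Z(G)^0 \cap G'$ of $G'$. By the semisimplicity just established, this subgroup is finite, so $\ker(p)$ is finite. (Surjectivity of $p$, implicit for the applications, would follow because $\mathbb{L}(Z(G)^0 \cdot G') = Z(\mathbb{L}(G))^0 \cdot \mathbb{L}(G)' = \mathbb{L}(G)$ by Fact \ref{borel}, and the resulting definable subgroup $Z(G)^0 \cdot G'$ of bounded index in the definably connected group $G$ must equal $G$.)
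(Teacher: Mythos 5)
The paper gives no proof of this statement: it is quoted verbatim from \cite[Cor.6.4]{HPP08}, so what you are really attempting is a from-scratch reproof of a substantial external theorem. Your first step contains a genuine gap. There is no o-minimal analogue of Zilber's indecomposability theorem in the form you invoke: a definably connected definable set containing the identity need not generate a definable subgroup, and the union $\bigcup_n X^n$ need not stabilize at a finite stage, because in an o-minimal structure a definable set can grow strictly at each stage without its dimension increasing. A basic counterexample is the interval $[0,1]$ in $(R,+)$ over a non-archimedean real closed field: the subgroup it generates is the convex subgroup of finitely bounded elements, which is not definable. More tellingly, your argument nowhere uses definable compactness, yet the paper itself recalls that for a general definably connected definable group the commutator subgroup ``might not be definable'' (there are known examples among non-definably-compact central extensions of $SL_2(R)$); a soft generation principle of the kind you describe would contradict this. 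The definability of $G'$ --- equivalently, the finiteness of the commutator width --- is precisely the hard content of \cite[Cor.6.4]{HPP08}, whose proof goes through the structure theory of definably compact groups ($G/Z(G)$ semisimple, the Peterzil--Pillay--Starchenko analysis of definably simple groups, and a detailed study of central extensions), not through indecomposability.

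The remaining steps are in better shape. Your reduction of semisimplicity to $\mathbb{L}(G')=\mathbb{L}(G)'$ is sound, and the step you flag as ``the main obstacle'' is actually easy to close without any fine analysis of $G^{00}$: by Pillay's conjecture $\dim(N)=\dim(\mathbb{L}(N))$ for any definably compact definable $N$, so $\mathbb{L}(N)$ finite forces $N$ finite. The verification that $p$ is a homomorphism with kernel $\bigl\{(z,z^{-1}): z\in Z(G)^0\cap G'\bigr\}$, which is finite once semisimplicity of $G'$ is known, is correct; your parenthetical surjectivity argument also works, though note that it uses $\mathbb{L}(Z(G)^0)=Z(\mathbb{L}(G))^0$, which the paper only establishes in Lemma \ref{l:cender} as a consequence of Fact \ref{HPP} itself.
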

The following is an immediate consequence of Fact \ref{HPP}.
\begin{lema}\label{l:cender}Let $G$ be a definably connected, definably compact definable group. Then $\mathbb{L}(G')=\mathbb{L}(G)'$ and $\mathbb{L}(Z(G)^0)=Z(\mathbb{L}(G))^0$. 
\end{lema}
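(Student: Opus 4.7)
The plan is to deduce both identities from Fact \ref{HPP} (the Hrushovski--Peterzil--Pillay decomposition), its classical Lie analogue Fact \ref{borel}, and the properties of $\mathbb{L}$ listed in Fact \ref{connec}. For the first identity, Fact \ref{HPP} ensures that $G'$ is definable, so Fact \ref{connec}(ii) gives $\mathbb{L}(G')=\pi(G')$. Since $\pi\colon G\to\mathbb{L}(G)$ is a surjective group homomorphism it sends the derived subgroup onto the derived subgroup: the image of a commutator is a commutator and, conversely, every commutator $[\pi(g),\pi(h)]=\pi([g,h])$ lies in $\pi(G')$. Hence
\[
\mathbb{L}(G')=\pi(G')=[\mathbb{L}(G),\mathbb{L}(G)]=\mathbb{L}(G)'.
\]

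For the second identity my strategy is a dimension comparison. Fact \ref{HPP} supplies a short exact sequence
\[
1\to\ker p\to Z(G)^0\times G'\xrightarrow{p}G\to 1
\]
with $\ker p$ finite. Applying the exact functor $\mathbb{L}$ together with Fact \ref{connec}(iv), and observing that any finite definably compact group $F$ satisfies $F^{00}=\{e\}$ and hence $\mathbb{L}(F)=F$, I obtain a short exact sequence of compact Lie groups
\[
1\to\ker p\to\mathbb{L}(Z(G)^0)\times\mathbb{L}(G')\to\mathbb{L}(G)\to 1,
\]
which yields $\dim\mathbb{L}(Z(G)^0)+\dim\mathbb{L}(G')=\dim\mathbb{L}(G)$. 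The classical Borel decomposition (Fact \ref{borel}) applied to the connected compact Lie group $\mathbb{L}(G)$ gives, in parallel, $\dim Z(\mathbb{L}(G))^0+\dim\mathbb{L}(G)'=\dim\mathbb{L}(G)$. Combined with the identity $\mathbb{L}(G')=\mathbb{L}(G)'$ already established, this forces $\dim\mathbb{L}(Z(G)^0)=\dim Z(\mathbb{L}(G))^0$.

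To close the argument I would observe that $\mathbb{L}(Z(G)^0)=\pi(Z(G)^0)$ is central in $\pi(G)=\mathbb{L}(G)$ (because $Z(G)^0$ is central in $G$) and connected (because $Z(G)^0$ is definably connected and $\mathbb{L}$ preserves connectedness, as recalled in the introduction), so it sits inside $Z(\mathbb{L}(G))^0$; being a connected Lie subgroup of the same dimension as the ambient connected Lie group, it must equal it, giving the second identity. I do not anticipate a serious obstacle: the lemma is essentially a mechanical translation of Facts \ref{HPP} and \ref{borel} via the exact functor $\mathbb{L}$, the only mildly delicate point being that $\mathbb{L}$ acts as the identity on the finite kernel of $p$, which is immediate from $F^{00}=\{e\}$ for finite $F$.
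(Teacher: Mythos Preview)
Your proof is correct and follows essentially the same route as the paper: the first identity via $\pi(G')=\mathbb{L}(G)'$ from surjectivity of $\pi$, and the second via the inclusion $\mathbb{L}(Z(G)^0)\subset Z(\mathbb{L}(G))^0$ together with a dimension comparison drawn from Facts~\ref{HPP} and~\ref{borel}. The only cosmetic difference is that the paper obtains $\dim\mathbb{L}(Z(G)^0)+\dim\mathbb{L}(G')=\dim\mathbb{L}(G)$ directly from $\dim Z(G)^0+\dim G'=\dim G$ (using that $\mathbb{L}$ preserves dimension), whereas you reach the same equality by applying the exact functor $\mathbb{L}$ and Fact~\ref{connec}(iv) to the short exact sequence coming from $p$.
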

\begin{proof}The projection $\pi:G\rightarrow \mathbb{L}(G)$ is a surjective homomorphism. Hence, $\pi(G')=\pi(G)'$. Since $\pi(G')=\mathbb{L}(G')$, we deduce that $\mathbb{L}(G')=\mathbb{L}(G)'$. Now, we show that $\mathbb{L}(Z(G)^0)=Z(\mathbb{L}(G))^0$. Since $\pi$ is a surjective homomorphism, $\mathbb{L}(Z(G))=\pi(Z(G))< Z(\pi(G))=Z(\mathbb{L}(G))$. Then, $$\mathbb{L}(Z(G)^0)=\mathbb{L}(Z(G))^0<Z(\mathbb{L}(G))^0.$$ Hence, it is enough to prove that $\dim(\mathbb{L}(Z(G)^0))=\dim(Z(\mathbb{L}(G))^0)$. By Fact \ref{HPP}, $\dim(G)=\dim(Z(G)^0\times G')=\dim(Z(G)^0)+\dim(G')$ and hence we have $\dim(\mathbb{L}(G))=\dim(\mathbb{L}(Z(G)^0))+\dim(\mathbb{L}(G'))$. On the other hand, it follows from Fact \ref{borel} that $\dim(\mathbb{L}(G))=\dim(Z(\mathbb{L}(G))^0)+\dim(\mathbb{L}(G)')$. Since $\dim(\mathbb{L}(G'))=\dim(\mathbb{L}(G)')$, we deduce that $\dim(\mathbb{L}(Z(G)^0))=\dim(Z(\mathbb{L}(G))^0)$, as required.
\end{proof}
\begin{cor}\label{c:grhom}Let $G$ be a definably connected, definably compact definable group. Then $\pi_n(G)\cong \pi_n(Z(G)^0\times G')$ for all $n\geq 1$.
\end{cor}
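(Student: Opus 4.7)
The proof is essentially an assembly of the tools already developed in this section, with Fact \ref{BMO} used to transfer the homotopy question to the Lie group side, where Borel's classical decomposition (Fact \ref{borel}) applies directly.

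The plan is to chase the following chain of isomorphisms and identifications, for each $n\geq 1$:
\begin{equation*}
\pi_n(G) \;\cong\; \pi_n(\mathbb{L}(G)) \;\cong\; \pi_n\!\left(Z(\mathbb{L}(G))^0\times \mathbb{L}(G)'\right) \;\cong\; \pi_n(\mathbb{L}(Z(G)^0\times G')) \;\cong\; \pi_n(Z(G)^0\times G').
\end{equation*}

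First I would verify that Fact \ref{BMO} applies to $Z(G)^0 \times G'$: both factors are definably connected definably compact definable subgroups of $G$ (the former by definition, the latter by Fact \ref{HPP}), so their product is definably connected definably compact, giving the outer two isomorphisms. The first isomorphism is Fact \ref{BMO} applied to $G$ itself; the last is Fact \ref{BMO} applied to $Z(G)^0\times G'$. The middle isomorphism of topological spaces uses Fact \ref{borel} applied to the compact connected Lie group $\mathbb{L}(G)$, which yields a homeomorphism $\mathbb{L}(G)\approx Z(\mathbb{L}(G))^0\times \mathbb{L}(G)'$ and hence an isomorphism of homotopy groups.

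The remaining identification $Z(\mathbb{L}(G))^0\times \mathbb{L}(G)' = \mathbb{L}(Z(G)^0)\times \mathbb{L}(G') = \mathbb{L}(Z(G)^0\times G')$ is precisely what Lemma \ref{l:cender} combined with the product formula Fact \ref{connec}(iv) provides. Since every link in the chain is now justified, the corollary follows immediately. There is no real obstacle here — the structural content has been absorbed into Lemma \ref{l:cender} and Fact \ref{BMO}, and this corollary is just the clean synthesis of the two.
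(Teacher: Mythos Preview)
Your proposal is correct and follows essentially the same approach as the paper: both pass to $\mathbb{L}(G)$ via Fact~\ref{BMO}, apply Borel's decomposition (Fact~\ref{borel}), identify the factors via Lemma~\ref{l:cender}, and return via Fact~\ref{BMO}. The only cosmetic difference is that the paper splits $\pi_n$ over the product and applies Fact~\ref{BMO} to each factor separately, whereas you keep the product intact using Fact~\ref{connec}(iv) and apply Fact~\ref{BMO} once to $Z(G)^0\times G'$; both are equally valid.
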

\begin{proof}By Lemma \ref{l:cender}, Fact \ref{BMO} and Fact \ref{borel} we have the following isomorphisms
$$\begin{array}{rcl}
\pi_n(G) & \cong  & \pi_n(\mathbb{L}(G))\cong \pi_n(Z(\mathbb{L}(G))^0 \times \mathbb{L}(G)')\cong\\
& \cong & \pi_n(Z(\mathbb{L}(G))^0) \times \pi_n(\mathbb{L}(G)')\cong\\
& \cong & \pi_n(\mathbb{L}(Z(G)^0)) \times \pi_n(\mathbb{L}(G'))\cong \\
& \cong & \pi_n(Z(G)^0) \times \pi_n(G')\cong \\
 & \cong & \pi_n(Z(G)^0 \times G'),
\end{array}$$
as required.
\end{proof}
Note that the definable homomorphism $p:Z(G)^0\times G'\rightarrow G:(g,h)\mapsto gh$ from Fact \ref{HPP} is a definable homotopy equivalence if and only if $p$ is an isomorphism. Indeed, if $p$ is a definable homotopy equivalence then $p_*:\pi_1(Z(G)^0\times G')\rightarrow \pi_1(G')$ is an isomorphism. On the other hand, since $p$ is a surjective homomorphism with finite kernel we have that $p$ is a definable covering (see \cite[Prop.2.11]{04EO}). Hence, it follows from \cite[Prop.2.9]{04EO} that the cardinality of $p^{-1}(e)$ equals the one of $\pi_1(G')/p_*(\pi_1(Z(G)^0\times G'))$, so that $p^{-1}(e)$ is trivial. Then, $p$ is an isomorphism, as required.

Even though $p$ above is not necessarily a definable homotopy equivalence, we now prove that $Z(G)^0\times G'$ and $G$ are actually definable homotopy equivalent.
\begin{proof}[Proof of Theorem \ref{teo2}] Let $d=\dim(Z(G)^0)$. Note that by Lemma \ref{l:cender}, we also have  $d=\dim(Z(\mathbb{L}(G))^0)$. It suffices to prove that $G$ is definable homotopy equivalent to $$\mathbb{T}_R^d\times G',$$ where $\mathbb{T}_R^d$ is the $d$-dimensional torus defined as the subset $[0,1)^d$ of $R^d$ with the sum operation modulo $1$. Indeed, $Z(G)^0$ is definable homotopy equivalent to $\mathbb{T}_R^d$ (see \cite[Cor.4.4]{08pBMO}).

Firstly, we show that $\pi_1(G)\cong \mathbb{Z}^d\times \text{Tor}(\pi_1(G))$. For, it follows from the proof of Corollary \ref{c:grhom} that $\pi_1(G)\cong \pi_1(Z(\mathbb{L}(G))^0)\times \pi_1(\mathbb{L}(G)')\cong \mathbb{Z}^d \times \pi_1(\mathbb{L}(G)')$. Moreover, $\mathbb{L}(G)'$ is a semisimple compact Lie group and hence $\pi_1(\mathbb{L}(G)')$ is finite, so that $\pi_1(\mathbb{L}(G)')\cong \text{Tor}(\pi_1(G))$, as required. In particular, since $\pi_1(G')\cong \pi_1(\mathbb{L}(G)')$ (see Fact \ref{BMO}), we have proved that $\pi_1(G')$ and $\text{Tor}(\pi_1(G))$ are isomorphic finite groups.

Now, take $\gamma_1,\ldots,\gamma_d:I\rightarrow G$ definable curves such that $$[\gamma_1]+\text{Tor}(\pi_1(G)),\ldots,[\gamma_d]+\text{Tor}(\pi_1(G)),$$ freely generate the group $\pi_1(G)/\text{Tor}(\pi_1(G))(\cong \mathbb{Z}^d)$. Consider the definable map,
$$f:\mathbb{T}_R^d\times G' \rightarrow G:(t_1,\ldots,t_d,g)\mapsto \gamma_1(t_1)\cdots\gamma_d(t_d)g.$$
We show that $f$ is a definable homotopy equivalence. By the o-minimal Whitehead theorem (see \cite[Thm.5.6]{07BO}) it suffices to prove that the homomorphism $f_*:\pi_n(\mathbb{T}_R^d\times G')\rightarrow \pi_n(G)$ is actually an isomorphism for all $n\geq 1$. Consider the definable maps $i:G'\rightarrow G:g\mapsto g$ and $j:\mathbb{T}_R^d\rightarrow G:(t_1,\ldots,t_d)\mapsto \gamma_1(t_1)\cdots\gamma_d(t_d)$. Since $G$ is a definable group, we can regard $f_*$ as the homomorphism
$$f_*:\pi_n(\mathbb{T}_R^d)\times \pi_n(G')\rightarrow \pi_n(G):(x,y)\mapsto j_*(x)+i_*(y).$$

\hspace{-0.6cm}{\em Claim: The homomorphism $i_*:\pi_n(G')\rightarrow \pi_n(G)$ is an isomorphism for every $n\geq2$ and injective for $n=1$.}\\

\vspace{-0.3cm}
\hspace{-0.6cm}Granted the claim and since $\pi_n(\mathbb{T}_R^d)=\pi_n(\mathbb{T}_\mathbb{R}^d)=0$ for all $n\geq2$ (see Fact \ref{t:compa}), we deduce that $f_*=i_*$ is an isomorphism for all $n\geq2$. To finish the proof we have to show that $f_*$ is an isomorphism for $n=1$. Firstly, note that by definition of the $\gamma_i$'s we have that the $[\gamma_i]$'s are $\mathbb{Z}-$linear independent and hence $j_*$ is injective. In particular, $j_*(\pi_1(\mathbb{T}_R^d))$ is torsion free. On the other hand, it follows from the claim that $i_*$ is injective. Hence, $i_*(\pi_1(G'))$ is a finite subgroup of $\pi_1(G)$ with the cardinality of $\text{Tor}(\pi_1(G))$, so that $i_*(\pi_1(G'))=\text{Tor}(\pi_1(G))$. Therefore, $j_*(\pi_1(\mathbb{T}_R^d))\cap i_*(\pi_1(G'))$ is trivial. We deduce that $f_*$ is injective. Finally, by definition of the definable curves $\gamma_i$'s we have that $\text{Im}(f_*)/\text{Tor}(\pi_1(G))=\pi_1(G)/\text{Tor}(\pi_1(G))$ and hence $\text{Im}(f_*)=\pi_1(G)$, as required.

\hspace{-0.6cm}\emph{Proof of the Claim.} By Fact \ref{HPP}, the kernel of $p:Z(G)^0\times G'\rightarrow G:(h,g)\mapsto h\cdot g$ is finite and therefore $p$ is a definable covering (see \cite[Prop.2.11]{04EO}). Then $p$ is a definable fibration (see  \cite[Thm.4.10]{07BO}) and hence, by the definable fibration property (see \cite[Cor.4.11]{07BO}), we have that $p_*$ is an isomorphism for all $n\geq 2$ and injective for $n=1$. Since $Z(G)^0$ is a definably compact abelian definable group, $\pi_n(Z(G)^0)=0$ for all $n\geq 2$ (see \cite[Cor.3.3]{08pBMO}). We deduce that $i_*=p_*$ is an isomorphism for all $n\geq 2$. On the other hand, the inclusion $k:G'\rightarrow Z(G)^0\times G'$ induces and injective map $k_*:\pi_1(G')\rightarrow \pi_1(Z(G)^0\times G')$ and hence $i_*=p_*\circ k_*$ is injective for $n=1$.
\end{proof}
As a consequence of Theorem \ref{teo2}, we obtain the following general result (compare with the discussion preceding the proof of Theorem \ref{teo2} above).
\begin{cor}Let $H$ and $G$ be definably connected definably compact definable groups such that $\pi_1(G)\cong \pi_1(H)$. If there exists a surjective homomorphism $p:G\rightarrow H$ with finite kernel then $G$ and $H$ are definable homotopy equivalent.
\end{cor}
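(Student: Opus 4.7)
The plan is to reduce the corollary to Theorem \ref{teo2} by showing that the restriction of $p$ to the commutator subgroups is actually a definable isomorphism $G'\xrightarrow{\sim} H'$. The strategy rests on the observation that although $p$ itself need not be a homotopy equivalence (a torus factor can kill the map on $\pi_1$), the torus part is ``absorbed'' by the homotopy equivalence of Theorem \ref{teo2}, leaving the commutator parts, where the finite kernel of $p$ has to be trivial for cardinality reasons.

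First I would check that $p$ restricts to a definable covering $p|_{G'}:G'\to H'$. Since $p$ is surjective, $p(G')=p([G,G])=[p(G),p(G)]=H'$, so the restriction is surjective. Its kernel $\ker(p)\cap G'$ is a subgroup of the finite group $\ker(p)$, hence finite, and so by \cite[Prop.2.11]{04EO} the restriction is a definable covering. Next I would extract the decomposition of $\pi_1$ that appears inside the proof of Theorem \ref{teo2}: for any definably connected definably compact definable group $G$, one has
$$\pi_1(G)\cong \mathbb{Z}^{\dim Z(G)^0}\times \mathrm{Tor}(\pi_1(G))\quad\text{and}\quad \pi_1(G')\cong \mathrm{Tor}(\pi_1(G)).$$
From the hypothesis $\pi_1(G)\cong\pi_1(H)$ I then read off $\dim Z(G)^0=\dim Z(H)^0=:d$ and $|\pi_1(G')|=|\pi_1(H')|<\infty$.

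Now since $p|_{G'}$ is a definable covering, the induced $(p|_{G'})_{*}:\pi_1(G')\to\pi_1(H')$ is injective (a standard property of definable coverings, cf.\ \cite[Prop.2.9]{04EO}); being an injection between finite groups of equal cardinality, it is an isomorphism. Applying the argument in the paragraph preceding the proof of Theorem \ref{teo2} to $p|_{G'}$ — namely that the size of the covering fibre equals the index $[\pi_1(H'):(p|_{G'})_{*}\pi_1(G')]$ — I conclude $|\ker(p|_{G'})|=1$, so $p|_{G'}:G'\to H'$ is a definable isomorphism.

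Finally I assemble the pieces. By Theorem \ref{teo2} applied to both groups, $G$ is definable homotopy equivalent to $\mathbb{T}_R^{d}\times G'$ and $H$ is definable homotopy equivalent to $\mathbb{T}_R^{d}\times H'$ (the proof of that theorem actually produces a definable homotopy equivalence with $\mathbb{T}_R^{d}\times(-)'$ directly, using that $\dim Z(G)^0=\dim Z(H)^0=d$). Composing with the definable isomorphism $\mathrm{id}_{\mathbb{T}_R^d}\times p|_{G'}$ yields the chain
$$G\;\simeq\;\mathbb{T}_R^{d}\times G'\;\cong\;\mathbb{T}_R^{d}\times H'\;\simeq\;H$$
of definable homotopy equivalences, as required. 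The only delicate point is the cardinality bookkeeping in the second paragraph — specifically, recognising that the finite cardinality of $\pi_1(G')$ and $\pi_1(H')$ is exactly the torsion of $\pi_1$ and hence transported by the hypothesised abstract isomorphism $\pi_1(G)\cong\pi_1(H)$; everything else is a direct appeal to Theorem \ref{teo2} and to the covering-space machinery already invoked in the paper.
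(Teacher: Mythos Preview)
Your proposal is correct and follows essentially the same approach as the paper: both arguments extract from the proof of Theorem~\ref{teo2} the decomposition $\pi_1(G)\cong\mathbb{Z}^{\dim Z(G)^0}\times\pi_1(G')$ with $\pi_1(G')$ finite, use $\pi_1(G)\cong\pi_1(H)$ to match the torus dimensions and the cardinalities of $\pi_1(G')$, $\pi_1(H')$, and then show $p|_{G'}$ is a definable isomorphism via the covering fibre--index relation. The only differences are cosmetic (you spell out why $p(G')=H'$ and reorder the steps slightly).
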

\begin{proof}Firstly, it follows from Corollary \ref{c:grhom} that $\pi_1(G)\cong \mathbb{Z}^n\times \pi_1(G')$ and $\pi_1(H)\cong \mathbb{Z}^m\times \pi_1(H')$, where $n=\dim(Z(G)^0)$ and $m=\dim(Z(H)^0)$. Moreover, since $G'$ and $H'$ are definably compact and semisimple, both $\pi_1(G')$ and $\pi_1(H')$ are finite. Now, since $\pi_1(G)\cong \pi_1(H)$, we deduce that $n=m$ and the cardinality of $\pi_1(G')$ and $\pi_1(H')$ are equal.

By the proof of Theorem \ref{teo2} we have that $G$ and $H$ are definably homotopy equivalent to $\mathbb{T}^n\times G'$ and $\mathbb{T}^m\times H'$ respectively. Since $n=m$, it is enough to prove that $G'$ and $H'$ are definable homotopy equivalent. Actually, we show that $p|_{G'}:G'\rightarrow H'$ is an isomorphism. Since $p|_{G'}$ is a surjective homomorphism with finite kernel we have that $p|_{G'}$ is a definable covering homomorphism. In particular, $(p|_{G'})_*:\pi_1(G')\rightarrow \pi_1(H')$ is injective  (see \cite[Cor.2.8]{04EO}). Moreover, since $\pi_1(G')$ and $\pi_1(H')$ are finite groups with the same cardinality, we deduce that $(p|_{G'})_*(\pi_1(G'))=\pi_1(H')$. On the other hand, the cardinality of $(p|_{G'})^{-1}(e)$ equals the one of $\pi_1(H')/ (p|_{G'})_*(\pi_1(G'))$, so that $p|_{G'}$ is an isomorphism.
\end{proof}
We are now ready to prove Theorem \ref{teo1}.
\begin{proof}[Proof of the Theorem \ref{teo1}]By Fact \ref{HPP}, $G'$ is a definably connected definably compact semisimple definable group and hence, by a result of  M. Edmundo, G. Jones and N. Peatfield, $G'$ has a very good reduction, i.e., there is a semialgebraic group $H$ defined without parameters such that $G$ is definably isomorphic to $H$ (see, e.g., \cite[Thm.4.4]{HPP08}). By Lemma \ref{l:cender}, we have that $d:=\dim(Z(G)^0)=\dim(Z(\mathbb{L}(G))^0)$. We show that both $G(\mathbb{R})$ and $\mathbb{L}(G)$ are homotopy equivalent to $$\mathbb{T}^d_{\mathbb{R}}\times H(\mathbb{R}),$$
where $\mathbb{T}_{\mathbb{R}}^d$ is the $d$-dimensional torus defined as the subset $[0,1)^d$ of $\mathbb{R}^d$ with the sum operation modulo $1$. First, we prove that $\mathbb{L}(G)$ is homotopy equivalent to $\mathbb{T}^d_{\mathbb{R}}\times H(\mathbb{R})$. For, by Lemma \ref{l:cender} and \cite[Thm.1.6]{07Be} we have $\mathbb{L}(G)'= \mathbb{L}(G') \cong H(\mathbb{R})$ and hence $Z(\mathbb{L}(G))^{0}\times \mathbb{L}(G)'$ is isomorphic to $\mathbb{T}^{d}_{\mathbb{R}}\times H(\mathbb{R})$. Then, by Fact \ref{borel},  $\mathbb{L}(G)$ is homotopy equivalent (actually homeomorphic) to $\mathbb{T}^{d}_{\mathbb{R}}\times H(\mathbb{R})$. On the other hand, by the proof of Theorem \ref{teo2}, $G$ is definable homotopy equivalent to $\mathbb{T}_{R}^{d}\times H$. Now, since both $G$ and $\mathbb{T}_{R}^{d}\times H$ are semialgebraic sets defined without parameters, it follows from Fact \ref{t:compa} that $G$ is semialgebraic homotopy equivalent to $\mathbb{T}_{R}^{d}\times H$ without parameters. In particular, $G(\mathbb{R})$ is semialgebraic homotopy equivalent to $\mathbb{T}^{d}_{\mathbb{R}}\times H(\mathbb{R})$ without parameters, as required.
\end{proof}
\begin{proof}[Proof of Corollary \ref{c:isom}] By the triangulation theorem we can assume that the underlying sets of both $G$ and $H$ are semialgebraic without parameters. By Theorem \ref{teo1}, $\mathbb{L}(G)$ and $\mathbb{L}(H)$ are homotopy equivalent to $G(\mathbb{R})$ and $H(\mathbb{R})$ respectively. Now, if $G$ and $H$ are definable homotopy equivalent then $G$ and $H$ are semialgebraic homotopy equivalent without parameters (see Fact \ref{t:compa}). Hence  $G(\mathbb{R})$ and $H(\mathbb{R})$ are (semialgebraic) homotopy equivalent (without parameters), so that $\mathbb{L}(G)$ and $\mathbb{L}(H)$ are homotopy equivalent. On the other hand, if $\mathbb{L}(G)$ and $\mathbb{L}(H)$ are homotopy equivalent then $G(\mathbb{R})$ and $H(\mathbb{R})$ are homotopy equivalent. Hence, by Fact \ref{t:compa}.(c) we have that $G(\mathbb{R})$ and $H(\mathbb{R})$ are semialgebraic homotopy equivalent without parameters, so that $G$ and $H$ are semialgebraic homotopy equivalent (without parameters).
\end{proof}
\begin{obs}Theorem \ref{teo1} allows us to extend the functor $\mathbb{L}$ to definable maps up to homotopy. That is, given two definably connected definably compact definable groups $G$ and $H$, consider the o-minimal homotopy set $[G,H]$ and the homotopy set $[\mathbb{L}(G),\mathbb{L}(H)]$ (see the definition after Fact \ref{teo:triangulacion}). We define a map $\widetilde{\mathbb{L}}:[G,H]\rightarrow [\mathbb{L}(G),\mathbb{L}(H)]$ as follows. Let $f:G\rightarrow H$ be a definable map. We can assume that the underlying sets of both $G$ and $H$ are semialgebraic without parameters. Then, by Fact \ref{t:compa} the map $f$ is definably homotopic to a semialgebraic map $g:G\rightarrow H$ defined without parameters. On the other hand, by Theorem \ref{teo1} there are $\phi_G:\mathbb{L}(G)\rightarrow G(\mathbb{R})$ and $\psi_H:H(\mathbb{R})\rightarrow \mathbb{L}(H)$ definable homotopy equivalences. Finally, we define $\widetilde{\mathbb{L}}([f]):=[\psi_H\circ g(\mathbb{R}) \circ \phi_G]$. Note that $\widetilde{\mathbb{L}}$ is well-defined since it depends neither on the choice of $\phi_G$, $\psi_H$ and $g$ nor the representant of $[f]$.
\end{obs}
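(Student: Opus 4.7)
The plan is to verify each of the four independence claims in the remark separately, in each case reducing the question to an application of Fact \ref{t:compa} or to the specific construction in the proof of Theorem \ref{teo1}. Throughout, I would keep $\phi_G$ and $\psi_H$ fixed until the last step.

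First I would handle independence of the representative of $[f]$: if $f\sim f'$ definably and $g$ is a semialgebraic parameter-free map with $g\sim f$ provided by Fact \ref{t:compa}.(a), then by transitivity $g\sim f'$, so the same $g$ is a valid choice for both classes and the two values of $\widetilde{\mathbb{L}}$ coincide trivially. Next, I would establish independence of the semialgebraic parameter-free lift $g$. Suppose $g_1,g_2\colon G\to H$ are two such lifts; transitivity yields $g_1\sim g_2$ definably, hence $[g_1]=[g_2]$ in $[G,H]^{\mathcal{R}_{sa}}$ by Fact \ref{t:compa}.(a). Two applications of Fact \ref{t:compa}.(b), first descending to a common parameter-free base (after triangulating $G$ and $H$ over $\overline{\mathbb{Q}}$ as in Theorem \ref{cor:catdefreal}) and then extending to $\mathbb{R}_{sa}$, show that $g_1(\mathbb{R})$ and $g_2(\mathbb{R})$ are semialgebraically homotopic over $\mathbb{R}_{sa}$; Fact \ref{t:compa}.(c) promotes this to a classical homotopy. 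Pre- and post-composing with the fixed $\phi_G$ and $\psi_H$ preserves the homotopy, giving $[\psi_H\circ g_1(\mathbb{R})\circ\phi_G]=[\psi_H\circ g_2(\mathbb{R})\circ\phi_G]$.

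For the independence of $\phi_G$ and $\psi_H$, I would appeal to the explicit construction in the proof of Theorem \ref{teo1}: both $G(\mathbb{R})$ and $\mathbb{L}(G)$ are identified with $\mathbb{T}^d_{\mathbb{R}}\times H_0(\mathbb{R})$, where $H_0$ is the semialgebraic parameter-free realisation of $G'$ with very good reduction and $d=\dim Z(G)^0=\dim Z(\mathbb{L}(G))^0$ by Lemma \ref{l:cender}. The canonical chain (using Fact \ref{HPP} on the definable side, Fact \ref{borel} on the Lie side, and the torus retraction from \cite[Cor.4.4]{08pBMO}) determines $\phi_G$ and $\psi_H$ up to composition with a self-homotopy-equivalence inherited from the ambiguity in the decomposition; one then checks that this ambiguity is absorbed when one composes with $g(\mathbb{R})$ at the level of homotopy classes, essentially because the generators $\gamma_1,\ldots,\gamma_d$ used in the proof of Theorem \ref{teo2} transfer compatibly between $\mathbb{L}(G)$ and $G(\mathbb{R})$ via Fact \ref{BMO}.

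The main obstacle is precisely this last step: a compact connected Lie group can have non-trivial self-homotopy-equivalences (for instance $\mathbb{T}^d$ has $GL(d,\mathbb{Z})$ worth of them up to homotopy), so the independence in $\phi_G,\psi_H$ is not purely formal. The rigorous argument has to exploit the naturality of the $Z(G)^0$--$G'$ decomposition and the fact that $\phi_G,\psi_H$ arising from Theorem \ref{teo1} all factor through the same semialgebraic model $\mathbb{T}^d_{\mathbb{R}}\times H_0(\mathbb{R})$, so that any two such choices differ by a self-equivalence sitting over the identity of this model and therefore homotopic to the identity after composition with $g(\mathbb{R})$.
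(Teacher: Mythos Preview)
The paper gives no argument for this remark beyond the bare assertion in its last sentence, so your verification of the first two independence claims (of the representative of $[f]$ and of the semialgebraic lift $g$) is already more than the paper supplies, and those two parts are correct as you wrote them.

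The genuine difficulty is exactly where you locate it: independence of $\phi_G$ and $\psi_H$. Your proposed resolution, however, does not go through. You say the ambiguity in $\phi_G$ is ``absorbed when one composes with $g(\mathbb{R})$ at the level of homotopy classes'', but this is not true in general: if $\phi_G$ and $\phi_G'$ are two homotopy equivalences $\mathbb{L}(G)\to G(\mathbb{R})$, they differ (up to homotopy) by a self-equivalence $\alpha$ of $\mathbb{L}(G)$, and $[\psi_H\circ g(\mathbb{R})\circ\phi_G\circ\alpha]$ has no reason to equal $[\psi_H\circ g(\mathbb{R})\circ\phi_G]$ unless $[\alpha]=[\mathrm{id}]$. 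Your own example of $\mathbb{T}^d$ with its $GL(d,\mathbb{Z})$ of self-equivalences already shows that the specific construction in the proof of Theorem~\ref{teo1} involves real choices (the generators $\gamma_1,\dots,\gamma_d$ of $\pi_1(G)/\mathrm{Tor}$) that change $\phi_G$ by a nontrivial $\alpha$, so appealing to ``the canonical chain'' or to naturality of the $Z(G)^0\times G'$ decomposition does not force $\alpha\sim\mathrm{id}$.

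In other words, the paper's claim that $\widetilde{\mathbb{L}}$ is independent of $\phi_G,\psi_H$ is, taken literally, too strong; what one can honestly prove (and what you have essentially proved) is that $\widetilde{\mathbb{L}}$ is well-defined once one \emph{fixes} a homotopy equivalence $\phi_G$ for each $G$ and $\psi_H$ for each $H$. Different global choices yield different maps $\widetilde{\mathbb{L}}$, related by pre- and post-composition with self-equivalences of $\mathbb{L}(G)$ and $\mathbb{L}(H)$. That is enough for the purposes of the remark (extending $\mathbb{L}$ to maps up to homotopy, in a way compatible with Corollary~\ref{c:isom}), but you should state it that way rather than try to manufacture an independence that is not there.
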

\begin{cor}\label{c:catgr}Let $G$ be a definably connected definably compact definable group. Then $\cat(G)^{\mathcal{R}}=\cat(\mathbb{L}(G))$. 
\end{cor}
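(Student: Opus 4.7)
The plan is to combine Theorem \ref{teo1} with the comparison Theorem \ref{cor:catdefreal} via the homotopy invariance of LS-category (Fact \ref{f:catinv}). The only subtlety is that Theorem \ref{teo1} is stated under the extra hypothesis that $G$'s underlying set is semialgebraic without parameters, so a preliminary reduction step is needed for the general $G$.

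First I would recall what the proof of Theorem \ref{teo1} actually produces: by Fact \ref{HPP} together with the Edmundo--Jones--Peatfield very good reduction of the semisimple piece, the commutator $G'$ is definably isomorphic to $H$ for some semialgebraic group $H$ defined without parameters, and by the proof of Theorem \ref{teo2}, $G$ is definably homotopy equivalent to $\mathbb{T}_R^d\times H$, where $d=\dim(Z(G)^0)$. Moreover, along the same lines it is shown there that $\mathbb{L}(G)$ is (classically) homotopy equivalent to $\mathbb{T}_{\mathbb{R}}^d\times H(\mathbb{R})$. Both $\mathbb{T}_R^d\times H$ and $\mathbb{T}_{\mathbb{R}}^d\times H(\mathbb{R})$ are semialgebraic without parameters, which is exactly the hypothesis needed to apply the comparison theorem.

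The chain of equalities I would then write is
\[
\cat(G)^{\mathcal{R}} \;=\; \cat(\mathbb{T}_R^d\times H)^{\mathcal{R}} \;=\; \cat(\mathbb{T}_{\mathbb{R}}^d\times H(\mathbb{R})) \;=\; \cat(\mathbb{L}(G)),
\]
where the first equality is Fact \ref{f:catinv} applied to the definable homotopy equivalence $G\simeq \mathbb{T}_R^d\times H$, the second is Theorem \ref{cor:catdefreal} applied to the semialgebraic set $\mathbb{T}_R^d\times H$ (defined without parameters, so that its realization over $\mathbb{R}$ is $\mathbb{T}_{\mathbb{R}}^d\times H(\mathbb{R})$), and the third is the classical homotopy invariance of the Lusternik--Schnirelmann category together with the homotopy equivalence $\mathbb{T}_{\mathbb{R}}^d\times H(\mathbb{R})\simeq \mathbb{L}(G)$ extracted from the proof of Theorem \ref{teo1}.

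There is no real obstacle: all the hard work was done in Section \ref{s:LScatgr} to produce Theorem \ref{teo1}, and in Section \ref{s:LScat} to produce Theorem \ref{cor:catdefreal}. The only thing to be careful about is that we cannot cite Theorem \ref{teo1} as a black box (since the general $G$ need not have semialgebraic underlying set without parameters), but must instead invoke the intermediate model $\mathbb{T}_R^d\times H$ constructed in its proof, which belongs to both worlds simultaneously: it is definable homotopy equivalent to $G$ on the o-minimal side and its realization over $\mathbb{R}$ is homotopy equivalent to $\mathbb{L}(G)$ on the classical side.
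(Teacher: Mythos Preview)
Your argument is correct, but it takes a longer detour than the paper does. The paper's proof dispenses with your ``only subtlety'' in one line: by the triangulation theorem one may replace $G$ by a definably isomorphic group whose underlying set is the realization of a simplicial complex with algebraic vertices, hence semialgebraic without parameters. This preserves both $\cat(\,\cdot\,)^{\mathcal{R}}$ (by Fact~\ref{f:catinv}) and $\mathbb{L}(\,\cdot\,)$ (since a definable isomorphism of groups induces an isomorphism of the quotients), so after this reduction Theorem~\ref{teo1} applies as a black box: $G(\mathbb{R})\simeq \mathbb{L}(G)$, whence $\cat(G(\mathbb{R}))=\cat(\mathbb{L}(G))$, and Theorem~\ref{cor:catdefreal} gives $\cat(G)^{\mathcal{R}}=\cat(G(\mathbb{R}))$.

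Your route---extracting the intermediate model $\mathbb{T}_R^d\times H$ from the \emph{proof} of Theorem~\ref{teo1} rather than from its statement---reaches the same destination and uses the same two ingredients (Theorem~\ref{cor:catdefreal} and homotopy invariance), but it reopens machinery that the paper has already packaged. The only real difference is that you believed the semialgebraic-without-parameters hypothesis could not be arranged for a general $G$; it can, via triangulation, and that is what the paper exploits.
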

\begin{proof}By the triangulation theorem we can assume that the underlying  set of $G$ is semialgebraic without parameters. By Theorem \ref{teo1}, $G(\mathbb{R})$ is homotopy equivalent to $\mathbb{L}(G)$ and hence $\cat(G(\mathbb{R}))=\cat(\mathbb{L}(G))$. Then, it follows from Theorem \ref{cor:catdefreal} that $\cat(G)^{\mathcal{R}}=\cat(G(\mathbb{R}))=\cat(\mathbb{L}(G))$, as required. 
\end{proof}
\begin{footnotesize}
\end{footnotesize}
\end{document}